\author{David Forsman\\
Université catholique de Louvain\\
\texttt{david.forsman@uclouvain.be}}
\title{A Categorical Approach to Finiteness Conditions}
\date{12 September 2025}
\theoremstyle{plain}
\newtheorem{theorem}{Theorem}[section]
\newtheorem{lemma}[theorem]{Lemma}
\newtheorem{corollary}[theorem]{Corollary}
\theoremstyle{definition}
\newtheorem{definition}[theorem]{Definition}
\newtheorem{example}[theorem]{Example}
\newtheorem{remark}[theorem]{Remark}
\numberwithin{equation}{section}
\newcommand{\N}{\mathbb{N}}
\begin{document}
\maketitle
      
\begin{abstract}
We introduce a general categorical framework for finiteness conditions that unifies classical notions such as Noetherianness, Artinianness, and various forms of topological compactness. This is achieved through the concept of \textbf{$\tau$-compactness}, defined relative to a \textbf{coverage} $\tau$. A coverage on a category $C$ is a specified class of covering diagrams, which are functors $F\colon I \to C/c$ of a specified variance, where the indexing category $I$ is equipped with a set of 'designated small objects'. An object $c$ is $\tau$-compact if every such covering diagram over it stabilizes at some designated small object. As we permit functors of mixed variance, our framework simultaneously models ascending chain conditions (such as Noetherianness) and descending chain conditions (such as Artinianness, topological compactness via closed sets).

The role of protomodularity of the ambient category emerges as a crucial property for proving strong closure results. Under suitable compatibility assumptions on the coverage, we show that in a protomodular category, the class of $\tau$-compact objects is closed under quotients and extensions. In a pointed context, this implies closure under finite products, generalizing the classical theorem that a finitely generated module over a Noetherian ring is itself Noetherian. We also leverage protomodularity to establish a categorical Hopfian property for Noetherian objects.

Our main application shows that in any regular protomodular category with an initial object, the classes of Noetherian and Artinian objects are closed under subobjects, regular quotients, and extensions. As a consequence, in any abelian category, these classes of objects form exact subcategories.
\end{abstract}
    
\section*{Introduction}
\addcontentsline{toc}{section}{Introduction}

This paper develops a categorical notion of compactness, $\tau$-compactness, defined relative to a \emph{coverage} $\tau$ on a category $C$. The framework generalizes classical finiteness conditions, including Noetherian and Artinian conditions in algebra, as well as several compactness notions in topology, such as compactness, Lindelöfness, and countable compactness.

A coverage assigns to each object $c$ a class of functors $F\colon I\to C/c$ of mixed variance indexed by a small category $I$ endowed with designated small objects. An object $c$ is $\tau$-compact if every such covering functor over $c$ stabilizes at some designated small object. Allowing functors to be of mixed variance recovers both ascending and descending chain conditions within the same formalism. Topological compactness of a topological space $X$ can be recovered via stabilization of covariant functors $\mathcal{P}(I)\to \textbf{Top}/X$ arising from $I$-indexed open covers of $X$, where $\mathcal{P}(I)$ is the power set of $I$ with its finite subsets designated as small. We may also consider contravariant functors $\mathcal{P}(I)\to \textbf{Top}/X$ arising from $I$-indexed families, whose intersection is empty, of closed subsets of $X$. The stabilization with respect to finite subsets also recovers the classical notion of compactness.

Our main results rely on protomodularity. Under natural compatibility assumptions, in a protomodular category, the $\tau$-compact objects are closed under regular quotients and extensions (Theorem~\ref{thm:Closure}); in pointed contexts, they are closed under finite products (Corollary~\ref{cor:closureproducts}). We also establish a categorical Hopfian property for Noetherian objects (Theorem~\ref{thm:Hopfian}). As an application, in any regular protomodular category with an initial object, the Noetherian objects are Hopfian and closed under subobjects, regular quotients, and extensions; consequently, in any abelian category, these classes form exact subcategories (Corollary~\ref{cor:noeth-regular-protomod-initial}).

Section~\ref{sec:Systems} recalls factorization systems, regularity, protomodularity, and develops functors of mixed variance. Section~\ref{sec:DirectedCoverageCompactnessNoetherianness} introduces coverages and $\tau$-compactness and proves the closure and Hopfian results together with examples. An appendix gathers further examples of classes of morphisms satisfying the relative protomodularity condition.

\section{Factorizations, Regularity, Protomodularity and Variance} \label{sec:Systems}

We begin by recalling properties of classes of morphisms needed for the theory of coverages $\tau$ and $\tau$-compactness, and then the standard notions surrounding orthogonal factorization systems. Our goal is to build toward \emph{protomodular factorization systems}, concentrating regularity and protomodularity into a property of a factorization system. Regularity is captured by a stable orthogonal factorization system $(E,M)$, while protomodularity appears via a variant of the short five lemma for diagrams involving $E$ and $M$. We give examples arising from arbitrary algebraic theories where suitable pairs $(E,M)$ satisfy the protomodularity condition.

\subsection{Factorizations and Regularity}
We define properties for classes of morphisms, then recall orthogonal factorization systems, and specialize to regular categories, which inherently carry a stable orthogonal factorization system.

\begin{definition}[Properties of Morphism Classes] \label{def:MorphismClassProperties}
Let $C$ be a category with pullbacks and let $A$ be a class of morphisms in $C$.
\begin{enumerate}
    \item $A$ is a \textit{system} of $C$ if $A$ contains all isomorphisms of $C$ and is closed under composition.
    \item $A$ is \textit{stable} if all pullbacks of morphisms in $A$ are in $A$ along any morphisms in $C$.
    \item A morphism $f$ in $C$ is said to be \textit{stably in $A$} if every pullback of $f$ is in $A$. Similarly, $f$ \textit{stably satisfies property $P$} if every pullback of $f$ satisfies $P$.
    \item We call a family of morphisms $(f_i\colon x_i\to x)_{i\in I}$ in $C$ an $A$-\textit{extremal epimorphic family}, if $f_i = mg_i$ implies that $m$ is an isomorphism whenever $m\in A$ and $g_i$ is an arbitrary morphism for $i\in I$. If the condition holds for a family of a single morphism, then we call the morphism an $A$-\textit{extremal epimorphism}. We define $A$-extremal monicness dually.
    \item $A$ is \textit{left-cancelable} if $g \circ f \in A$ and $g \in A$ implies that $f \in A$ for any composable pair $f,g$ in $C$. \textit{Right-cancelability} is defined dually.
    \item A pair $(E,M)$ of systems of $C$ with pullbacks is called \textit{stable} if both $E$ and $M$ are stable.
\end{enumerate}
\end{definition}

\begin{definition}[Lifting Problems and Factorization Systems] \label{def:LiftingFactorization}
Let $C$ be a category, and let $E$ and $M$ be classes of morphisms in $C$.
\begin{enumerate}
    \item A \textit{lifting problem} for a pair $(e\colon A \to B, m\colon X \to Y)$ is a commutative square:
    $$
    \begin{tikzcd}
    A \arrow[r, "u"] \arrow[d, "e"'] & X \arrow[d, "m"] \\
    B \arrow[r, "v"'] & Y
    \end{tikzcd}
    $$
    A morphism $h\colon B \to X$ such that $h \circ e = u$ and $m \circ h = v$ is a \textit{solution} (or \textit{lift}). We write $e \perp m$ if every lifting problem for $(e,m)$ has a unique solution. We extend this notation to classes: $E \perp M$ if $e \perp m$ for all $e \in E, m \in M$.

    \item The pair $(E,M)$ is a \textit{weak factorization system} if:
        \begin{enumerate}
            \item Every morphism $f$ in $C$ factors as $f = m \circ e$ with $e \in E$ and $m \in M$.
            \item $E =\ ^{\bot}\! M\coloneq\{e \in \mathrm{Mor}(C) \mid e \perp M \}$ and $M = E^\bot \coloneqq \{m \in \mathrm{Mor}(C) \mid E \perp m \}$.
        \end{enumerate}
\end{enumerate}
\end{definition}

\begin{definition}
A category $C$ is called \textit{regular}, if it is finitely complete and each morphism $f$ in $C$ factors $f = me$ where $m$ is monic and $e$ is a stably extremal epimorphism.
\end{definition}

\begin{theorem}
    Let $C$ be a regular category with the classes $E$ and $M$ of extremal epimorphisms and monomorphisms, respectively. Then $(E,M)$ is a stable orthogonal factorization system and $E$ is the class of regular epimorphisms. Espeically, each kernel pair has a coequalizer.
\end{theorem}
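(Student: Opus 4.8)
The plan is to verify the four assertions in turn, bootstrapping everything from the factorization that regularity supplies for free.

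\textbf{Preliminary reductions.} First I would record that $M$ is stable, since monomorphisms are stable under pullback in any category with pullbacks. For $E$ the key observation is a reduction: given any extremal epimorphism $f$, factor $f = me$ with $m$ monic and $e$ a stably extremal epimorphism (possible by regularity); extremality of $f$ forces the monomorphism $m$ to be an isomorphism, whence $f$ is itself stably extremal. In particular every morphism in $E$ is stable under pullback, so $E$ is stable. I would also note that in a finitely complete category an extremal epimorphism is automatically an epimorphism, since it factors through the equalizer of any parallel pair it equalizes and that equalizer, being monic, must then be an isomorphism.

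\textbf{Orthogonality and the factorization system.} Next, $E \perp M$. Uniqueness of lifts is automatic because members of $M$ are monic. For existence, given a lifting square with $e \in E$ on the left and $m \in M$ on the right, I would pull $m$ back along the bottom edge; the resulting projection is a monomorphism through which $e$ factors by the universal property of the pullback, so by extremality of $e$ this projection is an isomorphism, and composing its inverse with the other projection yields the lift. Together with the factorization property (the definition of regularity) and closure of $E$ and $M$ under composition and isomorphisms, this makes $(E,M)$ an orthogonal factorization system. Concretely, $E \subseteq {}^{\perp}M$ and $M \subseteq E^{\perp}$ are immediate from $E\perp M$, and for the reverse inclusions I would use the standard retract trick: if $m \in E^{\perp}$, factoring $m = m'e'$ with $e'$ stably extremal and $m'$ monic and solving the evident square against $m$ produces a section of $e'$, so $e'$ is simultaneously a split monomorphism and an extremal epimorphism, hence an isomorphism, and $m \cong m'$ is monic; dually, if $e \in {}^{\perp}M$ and $e = mg$ with $m$ monic, solving the evident square exhibits $m$ as a split epimorphism, hence an isomorphism, so $e$ is extremal.

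\textbf{Identifying $E$ with the regular epimorphisms --- the main step.} That every regular epimorphism lies in $E$ is routine: a regular epimorphism is an epimorphism, and if it factors through a monomorphism that monomorphism is forced to be a split epimorphism, hence an isomorphism. For the converse I would show every (stably) extremal epimorphism $e\colon A \to B$ is the coequalizer of its kernel pair $(k_1,k_2)\colon K \rightrightarrows A$. Given $g\colon A \to Z$ with $gk_1 = gk_2$, form $\langle e,g\rangle\colon A \to B\times Z$ and factor it by regularity as $A \xrightarrow{q} R \xrightarrow{\langle f',g'\rangle} B\times Z$ with $q$ stably extremal and $\langle f',g'\rangle$ monic, writing $f'\colon R\to B$ and $g'\colon R\to Z$ for the composites with the projections. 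Since $\langle f',g'\rangle$ is monic, the kernel pair of $q$ is the intersection of the kernel pairs of $e$ and of $g$, which is just $K$ because $K$ is contained in the kernel pair of $g$. I would then show $f'$ is an isomorphism: it is an extremal epimorphism because $f'q = e$ is one, and it is a monomorphism because the comparison map from $K$ (the kernel pair of $q$) to the kernel pair $R\times_B R$ of $f'$ is, by a pullback-stability argument, an epimorphism, forcing the two projections of $R\times_B R$ to agree. Being both an extremal epimorphism and a monomorphism, $f'$ is invertible; then $g'(f')^{-1}\colon B\to Z$ factors $g$ through $e$, uniquely since $e$ is an epimorphism, so $e$ is the coequalizer of $(k_1,k_2)$. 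Hence $E$ is exactly the class of regular epimorphisms. I expect the pullback-stability bookkeeping in the proof that $f'$ is monic to be the one genuinely delicate point; everything else is formal diagram chasing.

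\textbf{Coequalizers of kernel pairs.} Finally this drops out as a corollary: given any morphism $f$, factor $f = me$ with $e$ stably extremal and $m$ monic; since $m$ is monic the kernel pair of $f$ coincides with that of $e$, which by the previous step has coequalizer $e$.
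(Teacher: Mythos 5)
Your proof is correct. The paper itself gives essentially no argument for this theorem: it declares the stable orthogonal factorization system ``clear'' and invokes a cited result of Kelly to identify $E$ with the regular epimorphisms, leaving the coequalizer claim implicit. You instead supply the full classical argument: $E\perp M$ via pulling $m$ back along the bottom edge of the lifting square, the retract trick for the two orthogonal-complement equalities $E={}^{\perp}M$ and $M=E^{\perp}$, and --- the real content --- the Barr-style proof that a stably extremal epimorphism $e$ is the coequalizer of its kernel pair, obtained by factoring $\langle e,g\rangle$ through a monomorphism and showing the resulting comparison $f'$ is both an extremal epimorphism and a monomorphism, hence invertible. The delicate points all check out: the reduction of extremal to stably extremal epimorphisms via the regular factorization, the identification of the kernel pair of $q$ with $K$ (needed so that $qk_1=qk_2$), and the pullback-stability argument that the comparison $K\to R\times_B R$ is epic, since it factors as two pullbacks of the stably epic $q$. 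What your self-contained route buys is independence from the cited literature together with an explicit construction of the coequalizer of an arbitrary kernel pair; what the paper's route buys is brevity, since Kelly's theorem packages exactly the statement that a stable orthogonal factorization system whose right class is the monomorphisms has the regular epimorphisms as its left class.
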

\begin{proof}
    It is clear that $(E,M)$ is a stable orthogonal factorization system, and since $M$ is the class of monomorphisms, it follows by \cite{Kelly1991} that $E$ is the class of regular epimorphisms.
\end{proof}

\subsection{Protomodular Systems}
Different generalizations of protomodular categories have been of interest by controlling which morphism satisfy the short-five-lemma \cite{RelativeHomologicalCategories, MonoidsAndPointedSProtomodular}. We propose a new idea to this direction: For classes $E$ and $M$ of morphisms of a category $C$ with pullbacks, we define what we mean by $E$ satisfying the $M$-protomodularity condition. When $E$ is the class of retractions and $M$ the class of all morphisms (or equivalently the class of monomorphisms), the $M$-protomodularity of $E$ recovers the notion that $C$ is a protomodular category in the sense of Bourn \cite{BorceuxBourn2004}.
\begin{definition} \label{def:SystemProperties}
Let $C$ be a category with pullbacks. Let $E$ and $M$ be classes of morphisms in $C$. We say that $E$ satisfies the $M$-\textit{protomodularity condition} or the pair $(E,M)$ satisfies the protomodularity condition if for any commutative diagram
        $$
\begin{tikzcd}
a' \arrow[d, "\alpha"'] \arrow[r, "m'"] \arrow[rd, "\lrcorner", phantom, very near start] & b' \arrow[d, "\beta" description] \arrow[r, "e'"] & c' \arrow[d, "\gamma"] \\
a \arrow[r, "m" description] \arrow[rd] \arrow[rrd, very near start, phantom, "\lrcorner"]                                                  & b \arrow[r, "e"']                                 & c                      \\
                                                                                          & I \arrow[ru, "\theta"']                           & {}                    
\end{tikzcd}
        $$
        in $C$, where the left square and lower triangle form pullbacks, $e,e'\in E, \beta\in M$ and $\alpha$ and $\gamma$ are isomorphisms, then $\beta$ is an isomorphism. We say that $E$ satisfies the \textit{protomodularity condition}, if it satisfies the $M'$-protomodularity condition, where $M'$ is the class of all morphisms in $C$. The category $C$ is said to be a \textit{protomodular} category, if the class of retractions satisfies the protomodularity condition.
\end{definition}
\begin{remark}\label{rmk:equivalentformofprotomodularity}
    Notice that the $M$-protomodularity condition for a system $E$ of $C$, where $C$ has pullbacks, can be equivalently formulated as follows:
    Let the following be a commutative rectangle in $C$:
    $$
    \begin{tikzcd}
a' \arrow[r, "\cong"] \arrow[d] \arrow[rd, "\lrcorner", phantom, very near start] & a \arrow[r] \arrow[d] \arrow[rd, "\lrcorner", phantom, very near start] & I \arrow[d, "\theta"] \\
b' \arrow[r, "\beta"']                     & b \arrow[r, "e"']                & c                    
\end{tikzcd}
    $$
    If $e,e\beta\in E$, $\beta\in M$, then $\beta$ is an isomorphism. From this formulation, it is easy to see that if $C$ has an initial object $0$, the object $I$ can be replaced by the initial object $0$, and the definition of $M$-protomodularity stays equivalent. If $\theta$ has a factorization $\theta = me$ where $m$ is monic and $e$ is a stably extremal epimorphism, then $\theta$ may be replaced with $m$, and the condition stays equivalent.
\end{remark}

There is also a compact way of defining protomodularity of $C$: Every retraction $r\colon x\to y$ jointly generates $x$ via $(s,k)$, where $s\colon y\to x$ is any splitting of $r$ and $k$ any pullback along $r$.

\begin{example}
    Let $C$ be a category with pullbacks. The classes of retractions and stably extremal epimorphisms of $C$ form stable right-cancelable systems of $C$. It is known that if $C$ is a regular protomodular category, then the class $E$ of regular epimorphisms forms a stable, right-cancelable and protomodular system of $C$. It is perhaps less known that if $C$ is a protomodular category, then the class $E$ of stably extremal epimorphisms forms a stable, right-cancelable and protomodular system of $C$. The proof follows similarly to the regular case. One may consider the results Lemma 1.15 and Theorem 1.16 in \cite{inbook} and Theorem 4.1.4(2) in \cite{BorceuxBourn2004} and generalize them slightly to attain the protomodularity of $E$. 
\end{example}
\begin{lemma}\label{lem:reflecting-protomodularity}
    Let $F_i\colon C\to D_i,i\in I,$ be a jointly conservative family of pullback preserving functors. Assume that $C$ and $D_i$ are categories with pullbacks for $i\in I$. Let $(E_i,M_i)$ be a pair of classes of morphisms of $D_i$ satisfying the protomodularity condition for $i\in I$. Then $(E',M')$ satisfies the protomodularity condition, where $E' = \bigcap_{i\in I} F_i^{-1}(E_i)$ and $M' = \bigcap_{i\in I} F_i^{-1}(M_i)$. In particular, if $D_i$ is protomodular for each $i\in I$, so is $C$.
\end{lemma}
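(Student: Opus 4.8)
The plan is to verify the protomodularity condition for $(E',M')$ directly from Definition~\ref{def:SystemProperties} by transporting an arbitrary witnessing diagram along each $F_i$ into $D_i$, applying the hypothesis there, and then using joint conservativity to descend the conclusion back to $C$. So suppose we are given a commutative diagram in $C$ of the shape in Definition~\ref{def:SystemProperties}, in which the left square and the lower triangle are pullbacks, $e,e'\in E'$, $\beta\in M'$, and $\alpha,\gamma$ are isomorphisms; the goal is to show that $\beta$ is an isomorphism.

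Fix $i\in I$ and apply $F_i$ to the whole diagram. Being a functor, $F_i$ carries commutative squares to commutative squares and isomorphisms to isomorphisms, so $F_i\alpha$ and $F_i\gamma$ are isomorphisms in $D_i$; being pullback preserving, $F_i$ turns the left square and the lower triangle into pullbacks in $D_i$; and from $E'=\bigcap_{j} F_j^{-1}(E_j)\subseteq F_i^{-1}(E_i)$ and $M'=\bigcap_{j} F_j^{-1}(M_j)\subseteq F_i^{-1}(M_i)$ we obtain $F_i e, F_i e'\in E_i$ and $F_i\beta\in M_i$. Hence the image diagram is exactly an instance of the $M_i$-protomodularity condition for $(E_i,M_i)$, which holds by assumption, so $F_i\beta$ is an isomorphism in $D_i$. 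Since $i$ was arbitrary and the family $(F_i)_{i\in I}$ is jointly conservative, $\beta$ is an isomorphism in $C$. This proves that $(E',M')$ satisfies the protomodularity condition.

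For the last assertion, let $E_i$ be the class of retractions of $D_i$ and $M_i$ the class of all morphisms of $D_i$, so that "$D_i$ is protomodular" says precisely that $(E_i,M_i)$ satisfies the protomodularity condition. Then $M'=\bigcap_i F_i^{-1}(M_i)$ is the class of all morphisms of $C$, and by the first part $(E',M')$ satisfies the protomodularity condition. Every retraction of $C$ lies in $E'$, since a functor sends a retraction together with its section to a retraction together with its section; and the protomodularity condition for a pair $(E,M)$ can only weaken when $E$ is replaced by a smaller class, because it then constrains strictly fewer diagrams. Therefore the pair consisting of the retractions of $C$ and all morphisms of $C$ satisfies the protomodularity condition, i.e.\ $C$ is protomodular.

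I do not expect a genuine obstacle here: the entire content is that a pullback-preserving functor transports an instance of the protomodularity condition to an instance of it, which is exactly what joint conservativity is designed to exploit. The only care needed is the routine check that the $F_i$-image of the witnessing diagram really is an instance of the condition in $D_i$ — preservation of pullbacks is a hypothesis, while preservation of commutativity and of isomorphisms, and the containments $E'\subseteq F_i^{-1}(E_i)$ and $M'\subseteq F_i^{-1}(M_i)$, are immediate — together with the two elementary facts used at the end (functors preserve retractions; the condition is monotone under shrinking $E$). The same argument applies verbatim to the reformulated diagram of Remark~\ref{rmk:equivalentformofprotomodularity} if one prefers.
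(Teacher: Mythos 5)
Your proof is correct and follows the same route as the paper's: apply each $F_i$ to the witnessing diagram, use pullback preservation and the containments $E'\subseteq F_i^{-1}(E_i)$, $M'\subseteq F_i^{-1}(M_i)$ to recognize an instance of the condition in $D_i$, conclude $F_i\beta$ is an isomorphism, and descend via joint conservativity. Your treatment of the ``in particular'' clause (functors preserve retractions, and the condition is monotone under shrinking $E$) spells out details the paper leaves implicit, but it is not a different argument.
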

\begin{proof}
Consider the following commutative diagram in $C$
        $$
\begin{tikzcd}
a' \arrow[d, "\alpha"'] \arrow[r, "m'"] \arrow[rd, "\lrcorner", phantom, very near start] & b' \arrow[d, "\beta" description] \arrow[r, "e'"] & c' \arrow[d, "\gamma"] \\
a \arrow[r, "m" description] \arrow[rd] \arrow[rrd, very near start, phantom, "\lrcorner"]                                                  & b \arrow[r, "e"']                                 & c                      \\
                                                                                          & I \arrow[ru, "\theta"']                           & {}                    
\end{tikzcd}
        $$
        where the left square and lower triangle form pullbacks. Assume that $e,e'\in E'$, $\beta\in M'$ and $\alpha$ and $\gamma$ are isomorphisms. By the assumptions, we have that $F_i(\beta)$ is an isomorphism for each $i\in I$ and by joint conservativity, $\beta$ is an isomorphism.
\end{proof}

\begin{definition}
    Let $T$ be an algebraic theory with a signature $\sigma$. We call the theory $T$
    \begin{enumerate}
        \item a \textit{pointed} theory, if there is a constant symbol $e$ in $\sigma$, where $T$ proves $f(e,\ldots, e) \approx e$ for each function symbol $f$ in $\sigma$.
        \item a \textit{Malcev theory}, if there is a \textit{Malcev term} $p(x,y,z)$, where $T$ proves
        $$
        p(x,y,y)\approx x \quad \text{and}\quad p(x,x,y) \approx y.
        $$
        \item a \textit{protomodular theory}, if there exists $n\in\N$ and terms $\theta(x_0,\ldots, x_n), \theta_i(x,y)$ and a constant term $e_i$ for $0<i\leq n$ such that $T$ proves
        $$
        \theta_1(x,x)\approx e_1,\ldots, \theta_n(x,x)\approx e_n\quad \text{and}\quad \theta(y,\theta_1(x,y),\ldots, \theta_n(x,y)) \approx x.
        $$
        \item a \textit{semi-abelian} theory, if it is both pointed and protomodular.
    \end{enumerate} 
\end{definition}
The category $\textbf{Set}^T$ is pointed or protomodular if and only if $T$ is a pointed or a protomodular algebraic theory, respectively \cite{BorceuxBourn2004}. Each protomodular theory $T$ is also a Malcev theory, since it has a Malcev term $p(x,y,z) = \theta(x,\theta_1(y,z),\ldots, \theta_n(y,z))$. 

If the theory $T$ is protomodular and $C$ is a category with finite limits, then the category $C^T$ of $T$-algebras in $C$ forms a protomodular category. This follows from the fact that the representable functors $C \to \textbf{Set}$ induce a jointly conservative family of continuous functors $C^T\to \textbf{Set}^T$. By Lemma \ref{lem:reflecting-protomodularity}, $C^T$ is a protomodular category. Some examples of protomodular categories include the category \textbf{Grp} of groups, the opposite category of any elementary topos, and logical categories of Heyting algebras or hoops.
symmetricsymmetric
\subsection{Functors of Mixed Variance}
We give a quick introduction to functors of mixed variance. We want to generalize both covariant and contravariant functors $F\colon C\to D$, and this is done by introducing a notion of variance $(A,B)$, which is a two-sided strict factorization system, on the category $C$. For a variance $(A,B)$, the classes $A$ and $B$ represent those morphisms of $C$ that we consider as covariant or contravariant morphisms, respectively. We then define what is meant by a functor $F$ of variance $(A,B)$. The intuition is that a functor of variance $(A,B)$ behaves covariantly on $A$, contravariantly on $B$, and respects the two factorizations each morphism $f$ in $C$ has. 
\begin{definition}
    Let $C$ be a category with classes of morphisms $A$ and $B$. We call the pair $(A,B)$ a strict factorization system on $C$ if:
    \begin{enumerate}
        \item The classes $A$ and $B$ form wide subcategories of $C$; they are closed under composition and contain all identity morphisms of $C$.
        \item Each morphism $f$ in $C$ factors uniquely $f = ba$ where $a\in A$ and $b\in B$. 
    \end{enumerate}
    We call the pair $(A, B)$ a variance on $C$ if both $(A,B)$ and $(B,A)$ are strict factorization systems on $C$. We then denote the factorization of a morphism $f\colon x\to y$ via the commutative diagram
    $$
    \begin{tikzcd}
x \arrow[r, "f^+"] \arrow[d, "f_-"'] \arrow[rd, "f" description] & {f_t} \arrow[d, "f^{-}"] \\
f_s \arrow[r, "f_+"']                                            & y                    
\end{tikzcd}
    $$
    where $f_+,f^+\in A$ and $f_-,f^-\in B$. We call $(C,A,B)$ a category with a variance $(A,B)$, $A$ the class of covariant morphisms, and $B$ the class of contravariant morphisms.
    \end{definition}
    \begin{example} Here are some examples of variances:
    \begin{enumerate}
        \item Each category $C$ has variances $(A,B)$ and $(B,A)$, where $A$ is the class of all morphisms and $B$ is the class of all identity morphisms of $C$. We call $(A,B)$ the \textit{covariant variance} of $C$ and $(B,A)$ the contravariant variance of $C$.
        \item If $C$ and $D$ have variances $(A,B)$ and $(A',B')$, respectively, then $C\times D$ has the variance $(A\times A',B\times B')$. Similarly, $(A+A',B+B')$ is a variance on the coproducts $C+D$.
        \item A strict factorization system $(A,B)$ on a groupoid is always a variance and the subcategories $A,B$ are subgroupoids. Thus Zappa-Szép products, which are strict factorization systems generalizing semi-direct products, on a group $G$ correspond bijectively to variances on the group $G$.
    \end{enumerate}
    \end{example}

The following definition generalizes multi-functors which are covariant in some components and contravariant in others. 
\begin{definition}
    Let $C$ be a category with a variance $(A,B)$. Let $D$ be a category. We call $F\colon C\to D$ a functor of variance $(A,B)$, if $F$ determines functions from the objects and morphisms of $C$ to the objects and morphisms of $D$ such that $F$ maps identities to identities and respects the composition in the following sense: If $x\xrightarrow{f} y\xrightarrow{g} z$ is a composable pair in $C$, then $f\xmapsto{F} F(f)\colon F(f_s)\to F(f_t)$ and the diagram commutes:
    $$
    \begin{tikzcd}
                                                                                                    & F(f_s) \arrow[r, "F(f)"] & F(f_t) \arrow[rd, "F((g^+f^-)^+)"]  &           \\
F((gf)_s) \arrow[ru, "F((g_-f_+)_-)"] \arrow[rd, "F((g_-f_+)_+)"'] \arrow[rrr, "F(gf)" description] &                          &                                     & F((gf)_t) \\
                                                                                                    & F(g_s) \arrow[r, "F(g)"] & F(g_t) \arrow[ru, "F((g^+f^-)^-)"'] &          
\end{tikzcd}
    $$
    A \textit{natural transformation} $\eta\colon F\Rightarrow G\colon C\to D$ between functors $F$ and $G$ of variance $(A,B)$ consists of a family of morphisms $(\eta_c\colon Fc\to Gc)_{c\in\mathrm{Obj}(C)}$ forming a commutative square
    $$
    \begin{tikzcd}
F(f_s) \arrow[r, "\eta_{f_s}"] \arrow[d, "F(f)"'] & G(f_s) \arrow[d, "G(f)"] \\
F(f_t) \arrow[r, "\eta_{f_t}"']                   & G(f_t)                  
\end{tikzcd}
    $$
    for every morphism $f\colon c\to c'$ in $C$.

\end{definition}
\begin{theorem}
    Let $C$ and $D$ be categories with $C$ having a variance $(A,B)$. Then the association $F\mapsto (F\mid A,F\mid B)$ defines a bijective correspondence between functors $F\colon C\to D$ of variance $(A,B)$ and pairs $(G\colon A\to D,H\colon B\to D)$, where $G$ and $H$ are covariant and contravariant functors, respectively, satisfying the following:
    \begin{itemize}
        \item $G(c) = H(c)$ for each object $c$ in $C$.
        \item For any morphism $f\colon c\to c'$ in $C$, we have a commutative diagram
        $$
        \begin{tikzcd}
Gc \arrow[r, "G(f^+)"]                        & Gf_t                     \\
Gf_s \arrow[r, "G(f_+)"'] \arrow[u, "H(f_-)"] & Gc' \arrow[u, "H(f^-)"']
\end{tikzcd}
        $$
    \end{itemize}
    \end{theorem}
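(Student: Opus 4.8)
The plan is to exhibit the two assignments and check that they are mutually inverse. One map is the obvious one, $F\mapsto(F\mid A,F\mid B)$. For the other, given a compatible pair $(G,H)$ I define $\widehat{F}$ by $c\mapsto Gc=Hc$ on objects and $f\mapsto G(f^+)\circ H(f_-)$ on morphisms, a morphism $\widehat{F}f_s\to\widehat{F}f_t$; by the commuting square in the statement this also equals $H(f^-)\circ G(f_+)$, so $\widehat{F}$ is well defined. Throughout I use the trivial shape of the factorizations on the distinguished classes: for $a\in A$ one has $a^+=a_+=a$, $a^-=a_-=\mathrm{id}$, $a_s=\mathrm{dom}(a)$, $a_t=\mathrm{cod}(a)$; dually, for $b\in B$ one has $b^-=b_-=b$, $b^+=b_+=\mathrm{id}$, $b_s=\mathrm{cod}(b)$, $b_t=\mathrm{dom}(b)$.

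For the direction $F\mapsto(F\mid A,F\mid B)$, I would first feed into the composition coherence of $F$ a composable pair whose members both lie in $A$ (respectively $B$): two of the four ``cross'' factors in the hexagon become identities and it collapses to the plain law $F(a'a)=F(a')\circ F(a)$ (respectively $F(b'b)=F(b)\circ F(b')$). Hence $F\mid A$ is a covariant functor, $F\mid B$ a contravariant one, and they agree on objects. Next, feeding in the composable pair $(f^+,f^-)$, whose composite is $f$, the two legs of the hexagon read $F(f)=F(f^+)\circ F(f_-)$ and $F(f)=F(f^-)\circ F(f_+)$; the first exhibits $F$ as recovered from its restrictions by the formula defining $\widehat{F}$, and the two together are exactly the commuting square. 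So the restriction map does take values among compatible pairs, and building $\widehat{F}$ from $(F\mid A,F\mid B)$ returns $F$.

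For the reverse direction I would show that $\widehat{F}$ built from a compatible pair $(G,H)$ is a functor of variance $(A,B)$. Preservation of identities is immediate since $(\mathrm{id}_c)^+=(\mathrm{id}_c)_-=\mathrm{id}_c$. The substance is the composition coherence, for which the key tool is a ``double factorization'' identity: for composable $x\xrightarrow{f}y\xrightarrow{g}z$, regrouping $gf=g^-\,(g^+f^-)\,f^+$ and taking the $(A,B)$-factorization of the middle morphism $g^+f^-\colon f_t\to g_t$ gives, by uniqueness of factorizations, $(gf)^+=(g^+f^-)^+\circ f^+$ and $(gf)^-=g^-\circ(g^+f^-)^-$; dually, regrouping $gf=g_+\,(g_-f_+)\,f_-$ gives $(gf)_-=(g_-f_+)_-\circ f_-$ and $(gf)_+=g_+\circ(g_-f_+)_+$. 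Substituting the first pair into $\widehat{F}(gf)=G((gf)^+)\circ H((gf)_-)$ and using functoriality of $G$ and $H$ rewrites it as $G((g^+f^-)^+)\circ\widehat{F}(f)\circ H((g_-f_+)_-)$, which is the top leg of the hexagon (note $(g^+f^-)^+\in A$ and $(g_-f_+)_-\in B$). Doing the same starting from $\widehat{F}(gf)=H((gf)^-)\circ G((gf)_+)$ and then invoking the compatibility square for $g$ produces the bottom leg $H((g^+f^-)^-)\circ\widehat{F}(g)\circ G((g_-f_+)_+)$; the two expressions for $\widehat{F}(gf)$ agree by the compatibility square for $gf$ itself. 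Finally, restricting $\widehat{F}$ gives back $(G,H)$ because $\widehat{F}(a)=G(a)\circ H(\mathrm{id})=G(a)$ and $\widehat{F}(b)=G(\mathrm{id})\circ H(b)=H(b)$.

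I expect the composition coherence of $\widehat{F}$ to be the only real obstacle: it is the single step that is not an immediate unfolding of definitions, and it depends on getting the double-factorization identities exactly right---tracking which intermediate object is which, and checking that each regrouped composite of two $A$-morphisms (or two $B$-morphisms) again lies in $A$ (or $B$), which is where closure of the classes under composition enters. Everything else is forced by uniqueness of the two strict factorizations and ordinary functoriality of $G$ and $H$.
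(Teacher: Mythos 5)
Your proof is correct. Note that the paper itself does not prove this theorem; it only cites an external reference, so your argument supplies a self-contained proof where the paper defers. The structure is sound: the inverse assignment $\widehat{F}(f)=G(f^+)\circ H(f_-)$ is the right one, and the two delicate points are both handled correctly. First, the double-factorization identities $(gf)^+=(g^+f^-)^+\circ f^+$, $(gf)^-=g^-\circ(g^+f^-)^-$, $(gf)_-=(g_-f_+)_-\circ f_-$, $(gf)_+=g_+\circ(g_-f_+)_+$ do follow from uniqueness of the two strict factorizations once one observes that the regrouped outer composites lie in $A$ (resp.\ $B$) by closure under composition; these identities are exactly what makes the two legs of the paper's coherence hexagon typecheck, and substituting them together with the compatibility squares for $g$ and for $gf$ gives the commutativity of the hexagon for $\widehat{F}$. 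Second, in the forward direction, specializing the hexagon to pairs drawn from $A$ (resp.\ $B$) collapses it to covariant (resp.\ contravariant) functoriality of the restrictions, and specializing to the pair $(f^+,f^-)$ yields simultaneously the reconstruction formula $F(f)=F(f^+)\circ F(f_-)=F(f^-)\circ F(f_+)$ and the compatibility square, which gives both that the restriction map lands in compatible pairs and that the two assignments are mutually inverse. The trivial-factorization computations for morphisms of $A$ and $B$ ($a^+=a_+=a$, $a^-=a_-=\mathrm{id}$, and dually) are also stated correctly and are what make $\widehat{F}$ restrict back to $(G,H)$.
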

\begin{proof}
    A proof is found in \cite{forsman2023functorsvarianceheuristicnaturality}.
\end{proof} 

\begin{lemma}\label{lem:pullbackinduced}
    Let $C$ be a category with pullbacks and a morphism $f\colon x\to y$. Then the following holds:
    \begin{enumerate}
        \item Let $G\colon I\to C/y$ be a functor of variance $(A,B)$. Then the pullbacks along $f$ induce a functor $f^*(G)\colon I\to C/x$ of variance $(A,B)$. Moreover, the pullbacks of $f$ determine a natural transformation $f_*f^*(G)\Rightarrow G$, where $f_*\colon C/x\to C/y$ is the induced functor between the slice categories.
        \item Let $(E,M)$ be an orthogonal factorization system on $C$. Let $F\colon I\to C/x$ be a functor of variance $(A,B)$. Then the $(E,M)$-factorizations induce a functor $G\colon I\to C/y$ with a natural transformation $\eta\colon f_*F\Rightarrow G\colon I\to C/y$, where $\eta_i\in E$ and $G(i)\in M$ for each object $i$ in $I$.
    \end{enumerate}
\end{lemma}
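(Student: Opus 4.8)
The plan is to build each of the two functors by applying one universal property objectwise, and then to read off functoriality, the variance axiom, and the naturality squares from the uniqueness clause of that universal property. For (1), I would first fix a choice of pullbacks along $f\colon x\to y$, so that pullback along $f$ becomes an honest covariant functor $f^{\sharp}\colon C/y\to C/x$, right adjoint to the post-composition functor $f_{*}\colon C/x\to C/y$; the counit $\varepsilon\colon f_{*}f^{\sharp}\Rightarrow\mathrm{id}_{C/y}$ has as its component at $p\colon z\to y$ the pullback projection $z\times_{y}x\to z$. Since the defining commutative pentagon of a functor of variance $(A,B)$ is carried to the corresponding pentagon by any covariant functor, $f^{\sharp}\circ G$ is again of variance $(A,B)$, and I set $f^{*}(G)\coloneqq f^{\sharp}\circ G$. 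The required natural transformation is then the whiskering $\varepsilon G\colon f_{*}f^{*}(G)=f_{*}f^{\sharp}G\Rightarrow G$: its component at $i$ is the projection out of the pullback of the object $G(i)$ of $C/y$ along $f$, and its naturality at a morphism $\phi$ of $I$ is obtained by splitting $\phi$ into its $A$- and $B$-parts and invoking naturality of $\varepsilon$ on each — which is exactly how one tests naturality between functors of variance $(A,B)$. Unwound by hand: $f^{*}(G)(i)$ is the pulled-back object over $x$, and for $a\in A$, $b\in B$ the maps $f^{*}(G)(a)$, $f^{*}(G)(b)$ are the unique morphisms over $x$ compatible with the pullback projections, functoriality and the variance square falling out of uniqueness in the pullback universal property.

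For (2), observe that $f_{*}F(i)$ is the composite $\mathrm{dom}(F(i))\xrightarrow{F(i)}x\xrightarrow{f}y$. I would fix an $(E,M)$-factorization $f\circ F(i)=m_{i}\circ e_{i}$, $e_{i}\in E$, $m_{i}\in M$, for every object $i$ of $I$, then define $G(i)$ to be the object $m_{i}\colon Q_{i}\to y$ of $C/y$ and $\eta_{i}\coloneqq e_{i}$; since $m_{i}e_{i}=fF(i)$, $\eta_{i}$ is a morphism of $C/y$ from $f_{*}F(i)$ to $G(i)$, it lies in $E$, and $G(i)\in M$ by construction. A morphism $\phi\colon i\to j$ of $I$ makes $f_{*}F(\phi)$ sit in a commutative square in $C$ with vertical legs $fF(i)$ and $fF(j)$; because $E\perp M$, the $(E,M)$-factorization is functorial on such squares and produces a unique map $G(\phi)$ between the middle objects, of the appropriate variance, satisfying $G(\phi)e_{i}=e_{j}F(\phi)$ and $m_{j}G(\phi)=m_{i}$. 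The second equality makes $G(\phi)$ a morphism of $C/y$; the first is precisely the naturality square for $\eta$; and functoriality of $G$ together with the variance compatibility square follow, once again, from uniqueness of orthogonal lifts.

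I do not expect a serious obstacle. The one point that needs genuine care is the verification of the variance axiom — equivalently, the commuting square from the correspondence theorem relating the restrictions of $G$ to $A$ and to $B$ — for the two constructed functors; in both parts this square is forced by the relevant uniqueness statement (the universal property of the chosen pullbacks in (1); orthogonality $E\perp M$ in (2)), so it amounts to bookkeeping rather than a real difficulty. Structurally, the only thing to be careful about is to fix the choices of pullbacks and of $(E,M)$-factorizations at the outset, so that $f^{*}(G)$ and $G$ are actual functors rather than assignments defined merely up to unique isomorphism.
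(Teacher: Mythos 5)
Your proposal is correct, and for part (2) it is essentially the paper's own argument: factor $f\circ F(i)$ objectwise, take $\eta_i$ to be the $E$-part and $G(i)$ the $M$-part, and obtain $G(k)$ for each morphism $k$ (from $G(k_s)$ to $G(k_t)$ — your indices $i,j$ should really be $\phi_s,\phi_t$ in the mixed-variance setting, but this is cosmetic) as the unique orthogonal lift, with functoriality and the variance square following from uniqueness of lifts. For part (1) your construction is the same as the paper's, but your verification is packaged more conceptually: you invoke the adjunction $f_{*}\dashv f^{\sharp}$, observe that post-composition of a functor of variance $(A,B)$ with an ordinary covariant functor preserves the variance (the defining pentagon is carried along), and obtain the natural transformation as the whiskered counit $\varepsilon G$ — in fact naturality at a morphism $k$ of $I$ follows directly from naturality of $\varepsilon$ at the single morphism $G(k)$ of $C/y$, without even splitting $k$ into its $A$- and $B$-parts. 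The paper instead verifies functoriality of $f^{*}(G)$ and naturality by an explicit diagram chase through the pullback cubes. Your route buys a shorter, more reusable argument at the cost of fixing a global choice of pullbacks up front (which you correctly flag); the paper's chase is self-contained and makes no appeal to the adjunction. Both are sound.
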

\begin{proof}
    \hfill
    \begin{enumerate}
        \item We define the functor $F\colon I\to D_x$ of variance $(A,B)$ by setting $F(c)\colon x_c\to x$ to be the pullback of $G(c)$ along $f$ and $G(k)\colon G(k_s)\to G(k_t)$ to be the morphism induced by the pullbacks of $F(k_s)$ and $F(k_t)$ along $f$: The diagram
        $$
        \begin{tikzcd}
x_{k_s} \arrow[d, "F(k)" description] \arrow[r, "f_{k_s}"] \arrow[rd, very near start, phantom, "\lrcorner"]   & y_{k_s} \arrow[d, "G(k)"]   \\
x_{k_t} \arrow[d, "F(k_t)" description] \arrow[rd, very near start, phantom, "\lrcorner"] \arrow[r, "f_{k_t}"] & y_{k_t} \arrow[d, "G(k_t)"] \\
x \arrow[r, "f"']                                                       & y                          
\end{tikzcd}
        $$
        commutes for every morphism $k$ in $I$. Thus the family $(f_i)_{i\in \mathrm{Obj}(I)}$ is a natural transformation $f_*F\Rightarrow G$. It is clear that $F$ respects identities. To see that $F$ respects composition, fix morphisms $p\xrightarrow{k} q\xrightarrow{l}r$ in $I$. We show that the diagram 
        $$
        \begin{tikzcd}
F(k_s) \arrow[r, "F(k)"]                                                            & F(k_t) \arrow[d, "F((l^+k^-)^+)"]  \\
F((lk)_s) \arrow[d, "F((l_-k_+)_+)"'] \arrow[u, "F((l_-k_+)_-)"] \arrow[r, "F(lk)"] & F((lk)_t)                          \\
F(l_s) \arrow[r, "F(l)"']                                                           & F(l_t) \arrow[u, "F((l^+k^-)^-)"']
\end{tikzcd}
        $$
        commutes. Consider the following diagram:
        \begin{equation}\label{cd:frogeyes}
        \begin{tikzcd}
                           & x_{(lk)_s} \arrow[ld] \arrow[ddd, "F(lk)" description] \arrow[rd] \arrow[rrr, "f_{(lk)_s}"]                       &                           &                            & y_{(lk)_s} \arrow[ld] \arrow[rd] \arrow[ddd, "G(lk)" description] &                           \\
x_{l_s} \arrow[d, "F(l)"'] &                                                                                                                   & x_{k_s} \arrow[d, "F(k)"] & y_{l_s} \arrow[d, "G(l)"'] &                                                                   & y_{k_s} \arrow[d, "G(k)"] \\
x_{l_t} \arrow[rd]         & {} \arrow[d]                                                                                                      & x_{k_t} \arrow[ld]        & y_{l_t} \arrow[rd]         &                                                                   & y_{k_t} \arrow[ld]        \\
                           & x_{(lk)_t} \arrow[d, "F((lk)_t)"'] \arrow[rrr, "f_{(lk)_t}"'] \arrow[rrrd, "\lrcorner", phantom, very near start] &                           &                            & y_{(lk)_t} \arrow[d]                                              &                           \\
                           & x \arrow[rrr, "f"']                                                                                               &                           &                            & y                                                                 &                          
\end{tikzcd}
        \end{equation}
        Composing the three paths from $x_{(lk)_s}$ to $x$ yields the same morphism, since the morphisms to $x_{(lk)_t}$ are all morphisms between suitable objects in the slice category $C/x$. Since the lower rectangle in \eqref{cd:frogeyes} is a pullback and all compositions of paths $x_{(lk)_s}\to y_{(lk)_t}$ in diagram (\ref{cd:frogeyes}) are equal, it follows from the the naturality of the family $(f_i)_{i\in\mathrm{Obj}(I)}$ that the diagram \eqref{cd:frogeyes} commutes. Hence $f^*(G)\coloneqq F$ is a functor of variance $(A,B)$.

        \item We construct a functor $G\colon I\to C/y$. For each object $i$ in $I$, we attain a morphism $f F(i)\colon x_i\to x\to y$, which we may factor as
        $$
        \begin{tikzcd}
x_{i} \arrow[d, "F(i)"'] \arrow[r, "f_i"] & y_i \arrow[d, "G(i)"] \\
x \arrow[r, "f"']                       & y                    
\end{tikzcd}
        $$
        where $G(i)f_i$ an $(E,M)$-image factorization of $f F(i)$. For a morphism $k\colon i\to j$ in $I$, we define $G(k)\colon y_{k_s}\to y_{k_t}$ as the lift induced by the orthogonality $f_{k_s§}\perp G(k_t)$. Consider the following commutative diagram:
        $$
        \begin{tikzcd}
x_{k_s} \arrow[d, "F(k)"'] \arrow[r, "f_{k_s}", two heads]   & y_{k_s} \arrow[d, "G(k)" description, hook] \arrow[dd, "G(k_s)", hook, bend left, shift left] \\
x_{k_t} \arrow[d, "F(k_t)"'] \arrow[r, "f_{k_t}", two heads] & y_{k_t} \arrow[d, "G(k_t)" description, hook]                                                 \\
x \arrow[r, "f"']                                            & y                                                                                            
\end{tikzcd}
        $$
        By the left-cancelability of $M$, we have that $G(k)\in M$. Thus, we have $G\colon I\to C/y$ defined on objects and morphisms. By the uniqueness of the lifts, $G$ respects identities. We show that $G$ respects composition.
        
        Consider then a composable pair $p\xrightarrow{k}q\xrightarrow{l}r$ in $I$. Consider again the diagram (\ref{cd:frogeyes}). We are in the same situation, but without the pullbacks. The morphism $G(lk)$ is defined as the unique lift making the relevant diagrams commute. A diagram chase, using the naturality of the family $(f_i)_{i\in\mathrm{Obj}(I)}$ and the fact that $G$ is well-defined on morphisms, shows that the required composition law for $G$ holds.\qedhere
    \end{enumerate} 
\end{proof}
\section{Coverage, Compactness, and Noetherianness}
\label{sec:DirectedCoverageCompactnessNoetherianness}

The classical notions of Noetherianness in algebra (via the ascending chain condition) and compactness in topology (via finite subcovers) are fundamental finiteness properties. While both deal with the idea that certain a priori infinite processes turn out to be finite, their standard formulations differ significantly. This section aims to introduce a unified categorical framework, termed \textit{coverages}, to study such phenomena. We extend Grothendieck's notion of a coverage to talk about compactness.

Our approach generalizes the sequential nature of chain conditions by considering processes indexed by more general small categories. A key aspect of our generalization is the explicit assignment of certain objects within these diagrams as \textit{designated small objects}. This allows us to define compactness as a stabilization property that occurs at one of these designated early stages.

Our intuition for compactness arises out of two main examples; we typically seek a finite subcover from an open cover, or for Noetherianness, a finite index at which stabilization occurs. When moving from the poset of natural numbers to more complex directed posets, such as the powerset of an arbitrary set, defining what constitutes a "small element" intrinsically can be too restrictive. Instead of relying on an inherent structural notion of finiteness within the diagram, we opt for a more synthetic approach: we explicitly choose a subset of objects within the diagram and designate them as the small ones for the purpose of our compactness definition.

Crucially, this chosen subset of designated small elements must itself be directed. This ensures that any two "finite stages" have a common "finite stage" successor, a property that proves useful when relating the compactness of different objects, for instance, in proving closure properties. The specific nature or intrinsic "finiteness" of these designated elements is less important than their role as markers for stabilization and their collective directedness.

\begin{definition}[Diagram type]\label{def:diagramType}
    Let $I$ be a small category with a variance $(\mathrm{Cov},\mathrm{Contr})$. Let $A$ be a subset of objects of $I$. We call the tuple $(I,A,\mathrm{Cov},\mathrm{Contr})$ a \textit{diagram type}, if for all $x,y\in A$ there are morphisms $x\to z\leftarrow y$ in $I$ with $z\in A$. We call the set $A$ the set of \textit{designated small objects} of the diagram type $(I,A,\mathrm{Cov},\mathrm{Contr})$. 
\end{definition}

\begin{remark}
The designated small objects $A$ serve as the "finite stages" at which stabilization can occur. The directedness condition for $A$ ensures that any two finite stages have a common finite stage successor, which is used in the proof of closure properties. This generalizes the role of natural numbers in classical chain conditions.
\end{remark}

\begin{definition}[Coverage]
\label{def:DirectedCoverage}
    Let $C$ be a category with pullbacks.
    \begin{enumerate}
        \item A family $\tau = (\tau_c)_{c \in \mathrm{Ob}(C)}$ is called a global selection on $C$, if for every $c\in\mathrm{Obj}(C)$ the class $\tau_c$ consists of tuples
        $$
        (F\colon I \to C/c, A, \mathrm{Cov},\mathrm{Contr}),
        $$
        where the $F$ is a functor of variance $(\mathrm{Cov},\mathrm{Contr})$, $C/c$ is the slice category over $c$ and the tuple $(I,A,\mathrm{Cov},\mathrm{Contr})$ is a diagram type.
        
        We will often implicitly refer to a tuple $(F,A,\mathrm{Cov},\mathrm{Contr})$ of $\tau_c$ by just $F$ and call $F\colon I\to C/c$ a \textit{covering} of $c$. Morphisms in $\mathrm{Cov}$ and $\mathrm{Contr}$ are referred to as covariant and contravariant morphisms of the covering, respectively.
        \item A global selection $\tau$ on $C$ is said to be a \textit{coverage} if for any morphism $f\colon x \to y$ in $C$ and any $(F\colon I \to C/y, A,\mathrm{Cov},\mathrm{Contr})$ covering of $y$, the pullback-induced tuple $(f^*(F)\colon I\to C/x, A,\mathrm{Cov},\mathrm{Contr})$ is a covering of $x$.\footnote{The pullback induced $f^*(F)$ defines a functor of variance by Theorem \ref{lem:pullbackinduced}}
    \end{enumerate}
\end{definition}

\begin{definition}[$\tau$-Compactness]
\label{def:TauCompactnessDirected}
    Let $C$ be a category with pullbacks and $\tau$ a global selection on $C$. An object $c$ in $C$ is \textit{$\tau$-compact} if for every covering $(F\colon I \to C/c, A,\mathrm{Cov},\mathrm{Contr})$ of $c$, the functor $F$ stabilizes at some designated small object; that is, there is a designated small object $i_0\in A$, where $F(k)$ is an isomorphism for any composable pair $i_0\to i\xrightarrow{k} j$ in $I$.
\end{definition}

\begin{definition}[Subordination and Compatibility]
\label{def:SubordinationCompatibilityDirected}
    Let $C$ be a category with a global selection $\tau$. Let $M$ and $E$ be systems of morphisms in $C$.
    \begin{enumerate}
        \item We say that a functor $F\colon I\to C/c$ of variance $(\mathrm{Cov},\mathrm{Contr})$, where $c$ is an object in $C$, is \textit{subordinated to} $M$ if $F(i)\in M$ for each object $i\in I$. Furthermore, we say the global selection $\tau$ is \textit{subordinated to $M$} if for every covering $F$ in $\tau$, the functor $F$ is subordinated to $M$. 
        \item A morphism $f\colon c \to d$ in $C$ is \textit{$(E,M)$-image compatible} with $\tau$ if for every covering $(F\colon I \to C/c, A,\mathrm{Cov},\mathrm{Contr})\in \tau_c$, there exists a covering $(G\colon I \to C/y, A,\mathrm{Cov},\mathrm{Contr})\in \tau_y$ subordinated to $M$ and there is a natural transformation $\eta: f_*F \Rightarrow G$ having components $\eta_x\colon c_x\to d_x$ in $E$. In other words, we have a diagram that commutes for every morphism $k$ in $I$:
        $$
\begin{tikzcd}
c_{k_s} \arrow[d, "F(k)"'] \arrow[r, "\eta_{k_s}\in E"]   & d_{k_s} \arrow[d, "G(k)"]        \\
c_{k_t} \arrow[d, "F(k_t)"'] \arrow[r, "\eta_{k_t}\in E"] & d_{k_t} \arrow[d, "G(k_t)\in M"] \\
c \arrow[r, "f"']                                         & d                               
\end{tikzcd}
        $$
        \end{enumerate}
\end{definition}

\begin{example}\label{ex:DirectedCoverages}
Let $C$ be a category with pullbacks and a stable system $E$. Consider the following examples:
    \begin{enumerate}
        \item \textbf{Noetherian Objects:}
        We define a coverage $\tau_M$ by setting $\tau_{M,c}$ to consist of covariant functors $F\colon \N\to C/c$ subordinated to $M$, where we consider $\N$ to be the poset of natural numbers with all natural numbers designated as small objects. The pullback stability of $M$ shows that $\tau_M$ is a coverage by Lemma \ref{lem:pullbackinduced}. In concrete terms, an object $c$ in $C$ is $\tau_M$-compact, if and only if for any commutative sequence
        $$
        \begin{tikzcd}[column sep = 2 cm, row sep = 1cm]
x_0 \arrow[rd, "F_0" description, hook, shift right] \arrow[r, "F_0^1"] & x_1 \arrow[r, "F_1^2"] \arrow[d, "F_1" description, hook] & x_2 \arrow[ld, "F_2" description, hook, shift left] \arrow[r] & ... \\
                                                                        & x                                                         &                                                               &    
\end{tikzcd}        $$
        of triangles, where $F_n\in M,$ for $n\in\N$, and there is $i_0\in\N$ so that $F_{i_0}^i$ is an isomorphism for all $i\ge i_0$. When $M$ is the class of monomorphisms of $C$, this recovers the usual notion of a Noetherian object, an object where the partially ordered class of subobjects satisfies the ascending chain condition. We call $\tau_M$-compact objects $M$-\textit{Noetherian}, and when $M$ is the class of monomorphisms, we call $\tau_M$-compact objects \textit{Noetherian}.
        
        \item \textbf{Artinian Objects:} We perturb the previous definition of $M$-Noetherianness and define $\tau_c$ to contain all contravariant functors $F\colon \N\to C/c$ with all $n\in \N$ designated as small objects. Now $\tau$-compactness of $x$ is equivalent to the stabilization of any commutative sequence
        $$
        \begin{tikzcd}
... \arrow[r] & x_2 \arrow[r, "F_1^2"] \arrow[rd, "F_2" description, hook, shift right] & x_1 \arrow[r, "F_0^1"] \arrow[d, "F_1" description, hook] & x_0 \arrow[ld, "F_0", hook, shift left] \\
              &                                                                         & x                                                         &                                        
\end{tikzcd}
        $$
        in $C$ at some $i_0\in\N$. Hence for any $i\ge i_0$ the morphism $F(i_0\to i)\colon F_i\to F_{i_0}$ is an isomorphism. We call a $\tau$-compact object $c$ $M$-Artinian, and if $M$ is the class of monomorphisms, then $c$ is called just an Artinian object.

        \item \textbf{J,M-Compactness} We may generalize further the previous two examples. Let $M$ be a stable system of $C$, and $J$ be a class of diagram types. We define a coverage $\tau_{J,M}$ by setting $\tau_{J,M,c}$ to consist of all tuples $(F\colon I\to C/c, A,\mathrm{Cov},\mathrm{Contr})$ where $F$ is a functor of variance $(\mathrm{Cov},\mathrm{Contr})$ subordinated to $M$, $(I,A,\mathrm{Cov},\mathrm{Contr})\in J$ and $c$ is an object in $C$.

        If $M$ is a stable system of $C$ and $J = \{(I,\N,\mathrm{Cov},\mathrm{Contr})\}$, where $I$ is the poset of natural numbers with the $(\mathrm{Cov},\mathrm{Contr})$ being the covariant variance and all natural numbers considered small objects, then $\tau_{J,M}$-compactness is the same as $M$-Noetherianness. If instead the variance $(\mathrm{Cov},\mathrm{Contr})$ is the contravariant variance, then we attain the $M$-Artianness as $\tau_{J,M}$-compactness.
        
        \item \textbf{Topological compactness:}
        Let $C = \mathbf{Top}$ be the category of topological spaces. Let $\kappa$ be an infinite cardinal. The coverage $\tau_\kappa$ consists of covariant functors $F\colon\mathcal{P}(I)\to \mathbf{Top}/Y$, where $\mathcal{P}(I)$ is the poset of subsets of a set $I$ with the designated small objects being subsets of $I$ with cardinality less than $\kappa$ and $F$ is freely induced from an open cover $(U_i)_{i\in I}$ of $Y$ by setting $F(A) = \bigcup_{i\in A} U_i\hookrightarrow Y$. Since a $\tau_\kappa$-covering of a space $Y$ exactly corresponds to an open cover of $Y$, we have that $\tau_\kappa$-compactness of a topological space $Y$ means that any open cover of $Y$ has a subcover of cardinality less than $\kappa$. When $\kappa = \omega, \omega+1$, where $\omega$ is the first infinite cardinal and $\omega + 1$ the second, then $\tau_\kappa$-compactness of a topological space is equivalent to compactness and Lindelöfness, respectively. Restricting the size of sets $I$ to countably infinite, we attain countable compactness.

        Consider the coverage $\tau$ on $\textbf{Top}$ of those contravariant functors $\mathcal{P}(I)\to \textbf{Top}/X$ that arise from an $I$-indexed family, whose intersection is empty, of closed subsets of $X$. The corresponding $\tau$-compactness is equivalent to the usual notion of compactness of a space.
    \end{enumerate}
\end{example}
\begin{lemma}\label{lem:weak-images-compatibility}
    Let $C$ be a category with pullbacks. Assume that $(E,M)$ is an orthogonal factorization system on $C$. Let $J$ be a class of diagram types. Then each morphism in $C$ is $(E,M)$-image compatible with the coverage $\tau_{J,M}$.
\end{lemma}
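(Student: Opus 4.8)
The plan is to reduce the statement essentially verbatim to Lemma~\ref{lem:pullbackinduced}(2), which is exactly the construction that, from an $(E,M)$-factorization, manufactures a new covering together with a natural transformation whose components lie in $E$.

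First I would fix a morphism $f\colon c\to d$ in $C$ and an arbitrary covering $(F\colon I \to C/c, A,\mathrm{Cov},\mathrm{Contr})\in\tau_{J,M,c}$. Unwinding the definition of $\tau_{J,M}$, this means $F$ is a functor of variance $(\mathrm{Cov},\mathrm{Contr})$, the tuple $(I,A,\mathrm{Cov},\mathrm{Contr})$ lies in $J$, and $F$ is subordinated to $M$ (the last fact will not even be needed). Next I would apply Lemma~\ref{lem:pullbackinduced}(2) to the orthogonal factorization system $(E,M)$, the morphism $f$, and the functor $F\colon I\to C/c$: for each object $i$ of $I$ we factor $f\circ F(i)\colon c_i\to c\to d$ as $G(i)\circ\eta_i$ with $\eta_i\in E$ and $G(i)\in M$, and the lemma assembles these into a functor $G\colon I\to C/d$ of variance $(\mathrm{Cov},\mathrm{Contr})$ together with a natural transformation $\eta\colon f_*F\Rightarrow G$ with $\eta_i\in E$ and $G(i)\in M$ for every object $i$ of $I$.

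It then remains only to record the bookkeeping. The functor $G$ is defined on the same indexing data, so $(I,A,\mathrm{Cov},\mathrm{Contr})\in J$ still; and since $G(i)\in M$ for all $i$, the functor $G$ is subordinated to $M$; hence $(G\colon I\to C/d, A,\mathrm{Cov},\mathrm{Contr})\in\tau_{J,M,d}$. Because $\eta$ is natural and each $\eta_i\in E$, the square in Definition~\ref{def:SubordinationCompatibilityDirected}(2) commutes for every morphism $k$ in $I$ with both horizontal legs in $E$ and the lower-right vertical leg $G(k_t)$ in $M$. This is precisely the assertion that $f$ is $(E,M)$-image compatible with $\tau_{J,M}$; since $f$ was an arbitrary morphism of $C$, the claim follows.

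I expect essentially no obstacle: all the substantive work — well-definedness of $G$ on morphisms via the orthogonality lifts and its functoriality via the diagram chase around \eqref{cd:frogeyes} — is already carried out in Lemma~\ref{lem:pullbackinduced}(2). The only point worth a sentence of care is that $(E,M)$-image compatibility, as stated in Definition~\ref{def:SubordinationCompatibilityDirected}, refers to $\tau$ merely as a global selection, so the argument does not even require $\tau_{J,M}$ to be a genuine coverage (which, for the purposes of this lemma, one may take as supplied by Example~\ref{ex:DirectedCoverages}); and that ``$G$ subordinated to $M$'' is literally the conclusion $G(i)\in M$ handed to us by the lemma.
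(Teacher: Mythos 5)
Your proposal is correct and follows exactly the paper's own (much terser) proof: the paper likewise just cites Lemma~\ref{lem:pullbackinduced}(2) for the compatibility, and you have merely spelled out the bookkeeping that the factored covering $G$ lands in $\tau_{J,M,d}$ because $G(i)\in M$ and the diagram type is unchanged. Your side remark that the coverage property of $\tau_{J,M}$ is not strictly needed for image compatibility (the paper supplies it via Lemma~\ref{lem:pullbackinduced}(1)) is accurate.
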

\begin{proof}
     By Lemma \ref{lem:pullbackinduced}(1) $\tau_{J,M}$ is a coverage and Lemma \ref{lem:pullbackinduced}(2) shows the compatibility.
\end{proof}

\begin{definition}
    Let $C$ be a category with classes of morphisms $E,M$, and a class of objects $A$. We say that $A$ is \textit{closed under}
    \begin{enumerate}
        \item $M$-\textit{subobjects}, if $b\in A$ and $a\to b\in M$ implies that $a\in A$.
        \item $E$-\textit{quotients}, if $a\in A$ and $a\to b\in E$ implies that $b\in A$.
        \item $E$-\textit{weak extensions}, if the existence of a pullback square
        $$
        \begin{tikzcd}
a \arrow[d, "\bar{\phi}"'] \arrow[r, "\bar{f}"] \arrow[rd, "\lrcorner", phantom, very near start] & d \arrow[d, "\phi"] \\
b \arrow[r, "f"']                                                                                 & c                  
\end{tikzcd}
        $$
        where $a,c\in A$ and $f\in E$, implies $b\in A$. We say that $A$ is \textit{closed under weak extensions}, if $A$ is closed under $B$-weak extensions, where $B$ is the class of all morphisms of $C$.
    \end{enumerate}
\end{definition}

\begin{theorem}[Closure: Subobjects, Quotients and Extensions] \label{thm:Closure}
Let $C$ be a category with pullbacks. Let $M$ and $E$ be systems of $C$ with $M$ stable and left-cancelable. Assume that $\tau$ is a coverage of $C$ subordinated to $M$. 
\begin{enumerate}
    \item(\textbf{Subobjects}) Let $J$ be a class of diagram types. If $a\to b\in M$ and $b$ is $\tau_{J,M}$-compact, then so is $a$. In particular, $M$-Noetherian and $M$-Artinian objects are closed under $M$-subobjects.
    \item (\textbf{Quotients}) Let $f\colon a\to b$ be a morphism in $C$ with $a$ $\tau$-compact. If $f$ is stably $M$-extremal, then $b$ is $\tau$-compact. Furthermore, if $E$ is right-cancelable, $E\cap M$ consists of isomorphisms and $f$ is stably in $E$, then $b$ is $\tau$-compact. 
    \item (\textbf{Weak extensions}) Assume that $E$ satisfies the $M$-protomodularity condition. Consider the pullback square
    $$
    \begin{tikzcd}
a \arrow[d, "\bar{\phi}"'] \arrow[r, "\bar{f}"] \arrow[rd, "\lrcorner", phantom, very near start] & d \arrow[d, "\phi"] \\
b \arrow[r, "f"']                                                                                 & c              
\end{tikzcd}
$$
in $C$, where $a$ and $c$ are $\tau$-compact. If $f$ is $(E,M)$-image compatible with $\tau$, then $b$ is $\tau$-compact.
\end{enumerate}
\end{theorem}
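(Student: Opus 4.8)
\emph{Proof plan.} All three parts run on one template: given a covering of the object we want to be compact, transport it along the supplied morphism; apply $\tau$-compactness of the object(s) already known to be compact to get a stabilization index; and transfer the stabilization back across the transport using the cancellation hypotheses on $E$ and $M$. Part (3) needs one extra ingredient, the $M$-protomodularity condition, fed with the correct pullback data.

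\emph{Part (1).} I would take a covering $F\colon I\to C/a$ in $\tau_{J,M}$ and push it forward along $m\colon a\to b$ by post-composing every structure map with $m$, obtaining $m_*F\colon I\to C/b$. Since $M$ is a system, $m\circ F(i)\in M$ for every object $i$, so $m_*F$ is still subordinated to $M$ and lies over the same diagram type; hence $m_*F\in\tau_{J,M,b}$. Post-composition does not change the arrows of $I$ named by the functor, so $(m_*F)(k)=F(k)$ in $C$, and a stabilization index $i_0\in A$ for $m_*F$ (which exists by $\tau_{J,M}$-compactness of $b$) is one for $F$. The statements about $M$-Noetherian and $M$-Artinian objects are the special cases $J=\{(\N,\N,\mathrm{Cov},\mathrm{Contr})\}$ with the covariant, resp.\ contravariant, variance.

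\emph{Part (2).} Given a covering $G\colon I\to C/b$ in $\tau_b$, I would form $f^*(G)\colon I\to C/a$; this is a covering of $a$ by Definition~\ref{def:DirectedCoverage}, and by Lemma~\ref{lem:pullbackinduced}(1) it carries a natural transformation $f_*f^*(G)\Rightarrow G$ whose components $f_i\colon a_i\to b_i$ sit in pullback squares. By $\tau$-compactness of $a$, $f^*(G)$ stabilizes at some $i_0\in A$. Fixing $k\colon i\to j$ with a morphism $i_0\to i$, the pasting law shows that the naturality square of $f_*f^*(G)\Rightarrow G$ at $k$, with corners $a_{k_s},b_{k_s},a_{k_t},b_{k_t}$, is a pullback whose left vertical $a_{k_s}\to a_{k_t}$ is $(f^*G)(k)$ (an isomorphism) and whose right leg $f_{k_t}\colon a_{k_t}\to b_{k_t}$ is a pullback of $f$; moreover $G(k)\in M$ since $G$ is subordinated to $M$ and $M$ is left-cancelable. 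If $f$ is stably $M$-extremal, then $f_{k_t}$ is an $M$-extremal epimorphism and the square factors $f_{k_t}$ through $G(k)\in M$, forcing $G(k)$ to be an isomorphism. If instead $E$ is right-cancelable, $E\cap M$ consists of isomorphisms, and $f$ is stably in $E$, then $f_{k_s},f_{k_t}\in E$, the square gives $G(k)\circ f_{k_s}\in E$, right-cancelability gives $G(k)\in E$, and $G(k)\in E\cap M$ is an isomorphism. Either way $G$ stabilizes at $i_0$, so $b$ is $\tau$-compact.

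\emph{Part (3).} Given a covering $H\colon I\to C/b$ in $\tau_b$, the $(E,M)$-image compatibility of $f\colon b\to c$ (Definition~\ref{def:SubordinationCompatibilityDirected}) supplies a covering $G\colon I\to C/c$ in $\tau_c$ and a natural transformation $\eta\colon f_*H\Rightarrow G$ with components $\eta_i\in E$; by $\tau$-compactness of $c$, $G$ stabilizes at some $i_1\in A$. Separately, $\bar\phi^*(H)\colon I\to C/a$ is a covering of $a$, which by $\tau$-compactness of $a$ stabilizes at some $i_2\in A$. Using directedness of the designated small objects I would choose $i_0\in A$ with morphisms $i_1\to i_0$ and $i_2\to i_0$; then for every $k\colon i\to j$ with a morphism $i_0\to i$, both $G(k)$ and $(\bar\phi^*H)(k)$ are isomorphisms and $H(k)\in M$ (subordination and left-cancelability of $M$). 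Two pasting arguments produce the rectangle required by the $M$-protomodularity condition. First, the naturality square of $\bar\phi_*\bar\phi^*(H)\Rightarrow H$ at $k$ exhibits $a_{k_s}$ as the pullback of $\bar\phi_{k_t}\colon a_{k_t}\to b_{k_t}$ along $H(k)$, with comparison map $(\bar\phi^*H)(k)$. Second, since $\bar\phi$ is the pullback of $\phi$ along $f$ and $G(k_t)\circ\eta_{k_t}=f\circ H(k_t)$ (the component $\eta_{k_t}$ is a morphism over $c$), pasting exhibits $a_{k_t}$ as the pullback along $\eta_{k_t}\colon b_{k_t}\to c_{k_t}$ of the projection $d_{k_t}\to c_{k_t}$, where $d_{k_t}:=c_{k_t}\times_c d$. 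Stacking these two pullback squares over the bottom row $b_{k_s}\xrightarrow{H(k)}b_{k_t}\xrightarrow{\eta_{k_t}}c_{k_t}$ gives exactly the configuration of Remark~\ref{rmk:equivalentformofprotomodularity} with $\beta=H(k)\in M$, $e=\eta_{k_t}\in E$, $e\circ\beta=\eta_{k_t}\circ H(k)=G(k)\circ\eta_{k_s}\in E$ (naturality of $\eta$ at $k$, together with $G(k)$ an isomorphism and $\eta_{k_s}\in E$), and top-left comparison map the isomorphism $(\bar\phi^*H)(k)$. Hence $H(k)$ is an isomorphism, $H$ stabilizes at $i_0$, and $b$ is $\tau$-compact.

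\emph{Expected obstacle.} Parts (1) and (2) are essentially bookkeeping with the pasting law and the cancellation axioms. The real work is (3): I expect the delicate point to be checking that the source and the intermediate vertex of the pulled-back covering $\bar\phi^*(H)$ present themselves, via repeated pasting, as two successive pullbacks of exactly the shape demanded by the $M$-protomodularity condition — with the top comparison arrow equal to the stabilization isomorphism of $\bar\phi^*(H)$, and with both $e=\eta_{k_t}$ and $e\circ\beta$ lying in $E$ — and, orthogonally, remembering to use directedness of the designated small objects so that a single index $i_0$ simultaneously witnesses stabilization of $G$ over $c$ and of $\bar\phi^*(H)$ over $a$.
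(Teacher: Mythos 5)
Your proposal is correct and follows essentially the same route as the paper in all three parts: pushforward along $m$ for (1), pullback plus $M$-left-cancelability and the extremality/right-cancelability dichotomy for (2), and for (3) the identification of $\bar\phi_{k_t}$ as an iterated pullback along the $E$-component so that the $M$-protomodularity condition forces the covering map of $b$ to be invertible. The only cosmetic difference is that you invoke the equivalent rectangle form of protomodularity from Remark~\ref{rmk:equivalentformofprotomodularity}, whereas the paper feeds the same data directly into Definition~\ref{def:SystemProperties} via the chain of pullback identities $\bar{\phi}_{k_t} = f_{k_t}^{-1}(H_{k_t}^{-1}(\phi))$.
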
\newpage
\begin{proof}\hfill
\begin{enumerate}
    \item Let $F\colon I\to C/a\in\tau_{J,M,a}$. We may post-compose $F$ with the induced functor $C/a\to C/b$ to attain a covering of $b$, and since $b$ is $\tau_{J,M}$, it follows that $F$ stabilizes. 
    
    \item We show that $y$ is $\tau$-compact. Let $G\colon I\to C/b\in \tau_b$. By pullback closure of $\tau$, we have the pullback induced $F\colon I\to C/a\in \tau_a$. Since $a$ is $\tau$-compact, $F$ stabilizes at some designated small object $i_0$. Let $k$ be a morphism in $I$ so that $F(k)$ is an isomorphism. Consider the induced commutative diagram:
    $$
    \begin{tikzcd}[column sep = 1cm]
a_{k_s} \arrow[d] \arrow[r] \arrow[d, "\cong"'] \arrow[r, "f_{k_s}"] \arrow[rd, "\lrcorner", phantom, very near start] & b_{k_s}\arrow[d, "G(k)"]   \\
a_{k_t} \arrow[r, "f_{k_t}"] \arrow[d, hook] \arrow[rd, "\lrcorner", phantom, very near start]                                                               & b_{k_t} \arrow[d, hook, "G_{k_t}"] \\
a \arrow[r, "f"']                                                                                                  & b             
\end{tikzcd}
    $$
    Since $M$ is left-cancelable, $G(k)\colon b_{k_s}\to b_{k_t}$ is in $M$. If $f$ is stably $M$-extremal, then $f_{k_t}$ is $M$-extremal and thus $G(k)$ is an isomorphism. Assume then that $E$ is right-cancelable, $E\cap M$ consists of isomorphisms, and that $f$ is stably in $E$. By stability then, we have $f_{k_s},f_{k_t}\in E$. Since $E$ is a right-cancelable system, it follows that $G(k)\in E\cap M$ and thus $G(k)$ is an isomorphism. Therefore, $b$ is $\tau$-compact.

    \item Assume that $f\colon b\to c$ is $(E,M)$-image compatible with $\tau$. We show that $b$ is $\tau$-compact. Fix a covering $G\colon I\to C/b$ of $b$. We need to show that $G$ stabilizes. Denote by $F\colon I\to C/a$ the pullback induced covering of $a$. Since $f\colon b\to c$ is $(E,M)$-image compatible with $\tau$, there is a covering $H\colon I\to C/c$ of $c$ with morphisms $f_i\colon b_i\to c_i\in E$ for objects $i$ in $I$ making the diagram
    $$
    \begin{tikzcd}[column sep = 1.5cm, row sep = 0.8cm]
a_{k_s} \arrow[d, "F(k)" description, hook] \arrow[r, "\bar{\phi}_{k_s}"] \arrow[rd, "\lrcorner", phantom, very near start] & b_{k_s} \arrow[r, "f_{k_s}", two heads] \arrow[d, "G(k)" description, hook] & c_{k_s} \arrow[d, "H(k)" description, hook] \\
a_{k_t} \arrow[r, "\bar{\phi}_{k_t}" description] \arrow[d, hook] \arrow[rd, "\lrcorner", phantom, very near start]   & b_{k_t} \arrow[r, "f_{k_t}" description, two heads] \arrow[d, hook]   & c_{k_t} \arrow[d, hook]               \\
a \arrow[r, "\bar{\phi}" description] \arrow[rd, "\bar{f}"'] \arrow[rrd, "\lrcorner", phantom, very near start]       & b \arrow[r, "f" description]                                          & c                                     \\
                                                                                                                      & a \arrow[ru, "\phi"']                                                 & {}                                   
\end{tikzcd}
    $$
    commute for each $k\colon i\to j$ in $I$. Since $a$ and $c$ are $\tau$-compact, $F$ and $H$ stabilize at $i_0$ and $i_1$, respectively, for some designated small objects $i_0$ and $i_1$. By definition, we have a designated small object $i$ with morphisms $i_1\to i\leftarrow i_2$ in $I$. Let $i\to i'\xrightarrow{k} j$. By definition, $F(k)$ and $H(k)$ are isomorphisms. Since $E$ satisfies the $M$-protomodularity condition, it suffices to show that $\bar{\phi}_{k_t}$ is some pullback along $f_{k_t}$. Consider the equalities
    $$
    \bar{\phi}_{k_t} = G_{k_t}^{-1}(\bar{\phi}) = G_{k_t}^{-1}(f^{-1}(\phi)) = f_{k_t}^{-1}(H_{k_t}^{-1}(\phi))
    $$
    of pullbacks up to an isomorphism.\qedhere
\end{enumerate}
\end{proof}

\begin{theorem}[Noetherian Pullback-Theorem] \label{thm:Hopfian}
Let $C$ be a category with pullbacks and a stable system $M$. Let $f\colon x\to x$ be a morphism where $x$ is $M$-Noetherian and $f^n$ is stably $S$-extremal epimorphism for each $n\in\N$, where $S$ is the class of sections of $C$.\footnote{We define $f^0 = id_x$ and $f^{n+1} = f\circ f^n$ for $f\colon x\to x$ and $n\in\N$.} Assume that $f$ has a fixed point $\pi_0\colon I_0\to x\in M$; more specifically $f\pi_0 = \pi_0$. Then the pullback of $f$ along $\pi_0$ is an isomorphism.
\end{theorem}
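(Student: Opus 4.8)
The plan is to categorify the classical argument that a surjective endomorphism of a Noetherian module is injective, applied to the ascending chain of ``preimages'' $f^{-n}(I_0)$. First I would form, for each $n\in\N$, the pullback $P_n$ of $\pi_0$ along $f^n$, with projections $g_n\colon P_n\to x$ and $q_n\colon P_n\to I_0$ satisfying $f^n g_n=\pi_0 q_n$; since $f^0=\mathrm{id}_x$ we have $P_0=I_0$, $g_0=\pi_0$, $q_0=\mathrm{id}_{I_0}$. Using the fixed-point hypothesis $f\pi_0=\pi_0$ one computes $f^{n+1}g_n=f(\pi_0 q_n)=\pi_0 q_n$, so the universal property of $P_{n+1}$ yields a unique $\iota_n\colon P_n\to P_{n+1}$ with $g_{n+1}\iota_n=g_n$ and $q_{n+1}\iota_n=q_n$. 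Since $M$ is stable and $\pi_0\in M$, each $g_n\in M$, and left-cancellability of $M$ then gives $\iota_n\in M$; hence $F\colon\N\to C/x$, $n\mapsto g_n$, is a covering of $x$ subordinated to $M$, i.e.\ $F\in\tau_{M,x}$.

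Since $x$ is $M$-Noetherian, $F$ stabilizes at some $n_0\in\N$, so $\iota_m$ is an isomorphism for all $m\ge n_0$. Next I would exploit the surjectivity of $f^{n_0}$: because $f^{n_0+1}=f\circ f^{n_0}$, the pasting lemma identifies $P_{n_0+1}$ also with the pullback of $g_1$ along $f^{n_0}$; write $s\colon P_{n_0+1}\to P_1$ for the resulting projection, so that $g_1 s=f^{n_0}g_{n_0+1}$ and $q_1 s=q_{n_0+1}$. As $f^{n_0}$ is stably $S$-extremal epi, $s$ is an $S$-extremal epimorphism, and therefore so is $s\iota_{n_0}$ (composition with the isomorphism $\iota_{n_0}$ preserves this).

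The crux is the identity $s\iota_{n_0}=\iota_0 q_{n_0}\colon P_{n_0}\to P_1$, which I would verify by comparing both sides after the two projections of the pullback $P_1$: composing with $g_1$, the left side is $f^{n_0}g_{n_0+1}\iota_{n_0}=f^{n_0}g_{n_0}=\pi_0 q_{n_0}$ and the right side is $g_1\iota_0 q_{n_0}=\pi_0 q_{n_0}$; composing with $q_1$, the left side is $q_{n_0+1}\iota_{n_0}=q_{n_0}$ and the right side is $(q_1\iota_0)q_{n_0}=q_{n_0}$, using $q_1\iota_0=q_0=\mathrm{id}_{I_0}$. The relation $q_1\iota_0=\mathrm{id}_{I_0}$ also shows $\iota_0$ is a section, so $\iota_0\in S$. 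Now $s\iota_{n_0}=\iota_0\circ q_{n_0}$ is an $S$-extremal epimorphism with $\iota_0\in S$, so by the very definition of $S$-extremal epimorphism, $\iota_0$ is an isomorphism. Finally $q_1\iota_0=\mathrm{id}_{I_0}$ forces $q_1=\iota_0^{-1}$, an isomorphism; and $q_1$ is precisely the pullback of $f$ along $\pi_0$, which is the claim. (When $n_0=0$ the argument degenerates: stabilization already makes $\iota_0$ an isomorphism.)

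I expect the main obstacle to be the bookkeeping that replaces the one-line element-chase of the classical proof: identifying $P_{n_0+1}$ as a pullback in the two ways needed (to define $s$ and to obtain $q_1 s=q_{n_0+1}$), and recognizing that the classical step ``$z\in f^{-1}(I_0)$ lies in the image of $f^{n_0}$ restricted to $f^{-n_0}(I_0)$'' is encoded by the factorization $s\iota_{n_0}=\iota_0 q_{n_0}$ through the section $\iota_0$, so that extremality of $s$ upgrades $\iota_0$ from a split monomorphism to an isomorphism. The hypothesis that \emph{every} power $f^n$ is stably $S$-extremal epi is needed precisely because the stabilization index $n_0$ is not known in advance.
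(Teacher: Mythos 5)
Your argument is correct and follows essentially the same route as the paper: the chain of pullbacks $P_n$ of $\pi_0$ along $f^n$ with comparison maps $\iota_n$, stabilization by $M$-Noetherianness, and the identity $s\iota_{n_0}=\iota_0 q_{n_0}$ (the paper's naturality square $f_{N+1}^1\circ\phi_N^{N+1}=\phi_0^1\circ f_N^0$) combined with the $S$-extremality of $s$ to upgrade the section $\iota_0$ to an isomorphism, whence $q_1$ is invertible. One small remark: your appeal to left-cancellability of $M$ to conclude $\iota_n\in M$ is both unavailable (the theorem only assumes $M$ stable) and unnecessary, since subordination of the covering to $M$ only constrains the slice objects $g_n\colon P_n\to x$, which lie in $M$ by stability alone.
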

\begin{proof}
    We define the morphisms $\pi_n\colon I_n\to x$ and $f_n^0\colon I_n\to I_0$ via the following pullback
    $$
\begin{tikzcd}
I_n \arrow[d, "\pi_n"'] \arrow[r, "f_n^0"] \arrow[dr, phantom, "\lrcorner", very near start] & I_0 \arrow[d, "\pi_0"''] \\
x \arrow[r, "f^n"'] & x                                        
\end{tikzcd}
    $$
    of $f^n$ along $\pi_0$ for $n\in\N$. For $k,n\in\N,$ we define $\phi_n^{n+k}\colon I_n\rightarrow I_{n+k}$ and $f_{n+k}^n\colon I_{n+k}\rightarrow I_n$ to be the unique morphisms making the diagrams

$$
    \begin{tikzcd}
I_n \arrow[rd, "\phi_n^{n+k}" description] \arrow[rrrd, "f_n^0", bend left] \arrow[rdd, "\pi_n"', bend right] &                                                                    &  &                        &  & I_{n+k} \arrow[rd, "f_{n+k}^n" description] \arrow[rdd, "f^{k}\pi_{n+k}"', bend right] \arrow[rrrd, "f_{n+k}^0", bend left] &                                                                                                           &  &                        \\
                                                                                                              & I_{n+k} \arrow[drr, phantom, very near start, "\lrcorner"] \arrow[d, "\pi_{n+k}" description] \arrow[rr, "f_{n+k}^0"] &  & I_0 \arrow[d, "\pi_0"] &  &                                                                                                                             & I_n \arrow[rr, "f_n^0"] \arrow[rrd, "\lrcorner", phantom, very near start] \arrow[d, "\pi_n" description] &  & I_0 \arrow[d, "\pi_0"] \\
                                                                                                              & x \arrow[rr, "f^{n+k}"']                                           &  & x                      &  &                                                                                                                             & x \arrow[rr, "f^n"']                                                                                      &  & x                     
\end{tikzcd}
    $$
    commute. These morphisms are well-defined since $\pi_0$ is a fixed point of $f$. Furthermore, $f_n^0$ is a retraction with a spitting $\phi_0^n$ for each $n\in \N$. By stability, $\pi_n\in M$ for all $n\in\N$. Consider the functor $\Phi\colon \N\to C/x$, where $\Phi(n\to n+1)$ is defined by the commutative triangle
    $$
    \begin{tikzcd}
I_n \arrow[rd, "\pi_n"', hook] \arrow[rr, "\phi_n^{n+1}"] &   & I_{n+1} \arrow[ld, "\pi_{n+1}", hook'] \\
                                                          & x &                                       
\end{tikzcd}
    $$
    for each $n\in\N$. Since $x$ is $M$-Noetherian, it follows that $\Phi$ stabilizes. Thus there is $N\in\N$, where $\phi_N^{N+1}$ is an isomorphism. 

    Fix $m,n,k\in\N$ with $m\leq n$. Notice that the naturality square
    $$
    \begin{tikzcd}
I_{m+k} \arrow[r, "f_{m+k}^m"] \arrow[d, "\phi_{m+k}^{n+k}"'] & I_m \arrow[d, "\phi_m^n"] \\
I_{n+k} \arrow[r, "f_{n+k}^n"']                              & I_n                                  
\end{tikzcd}
    $$
    commutes by Lemma \ref{lem:pullbackinduced}(1). Setting $m= 0, n= 1, k = N$, we attain the equation
    $$
    f_{N+1}^1\circ \phi_{N}^{N+1} = \phi_0^1\circ f_{N}^0.
    $$
    Since $\phi_N^{N+1}$is an isomorphism, $\phi_0^1$ a section and $f_{N+1}^1$ is $S$-extremal, it follows that $\phi_0^1$ is an isomorphism. 
\end{proof}
\begin{definition}
    Let $C$ be a category with an orthogonal factorization system $(E,M)$. Let $K$ be a class of morphisms of $C$. We say that $K$ is closed under $(E,M)$-images along a morphism $f\colon a\to b$, if for any commutative square
    $$
    \begin{tikzcd}
        a' \arrow[d, "k"'] \arrow[r, "\bar{f}", two heads] & b' \arrow[d, "\bar{k}", hook] \\
        a \arrow[r, "f"']                                  & b                            
        \end{tikzcd}
    $$
    where $k\in K$ and $\bar{k}\in M$ and $\bar{f}\in E$, it holds that $\bar{k}\in K$. 
\end{definition}

\begin{lemma}\label{lem:StableImageClosure}
    Let $C$ be a category with pullbacks and an orthogonal factorization system $(E,M)$, where $M$ consists of monomorphisms. Let $\tau$ be a coverage subordinated to $M$ with each morphism in $E$ being $(E,M)$-image compatible with $\tau$. Let $K\subset M$ be a class of morphisms containing all isomorphisms. Consider the subcategory $\bar{K}$ generated by $K$. Let $\tau'$ be the restriction of $\tau$ to those coverings $F$ of $\tau$, where $F$ is subordinated to $\bar{K}$. Then the following holds:
    \begin{enumerate}
        \item If $K$ is stable, then $\bar{K}$ is a stable left-cancelable system of $C$.
        \item If $K$ is stable, then $\tau'$ is a coverage subordinated to $\bar{K}$.
        \item If $K$ is closed under $(E,M)$-images along morphisms in $E$, then so is $\bar{K}$.
        \item If $K$ is closed under $(E,M)$-images along morphisms in $E$ and every morphism in $E$ is $(E,M)$-image compatible with $\tau$, then every morphism in $E$ is $(E,\bar{K})$-image compatible with the global selection $\tau'$.
    \end{enumerate}
\end{lemma}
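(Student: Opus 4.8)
The plan is to prove the four items in order, feeding each into the next. For item (1): $\bar K$ is a system because a generated subcategory is closed under composition and contains the identities, while $K$ already contains all isomorphisms; stability follows from the pullback pasting lemma, since a morphism of $\bar K$ is a finite composite $k_n\circ\cdots\circ k_1$ of members of $K$ and its pullback along any morphism is the composite of the successive pullbacks of the $k_i$, each of which lies in $K$ as $K$ is stable. The step I expect to be the main obstacle is left-cancelability, and here I would use crucially that $M$, hence $\bar K$, consists of monomorphisms. Suppose $g\colon Y\to Z$ and $gf\colon X\to Z$ both lie in $\bar K$, and fix a decomposition $gf=k_n\circ\cdots\circ k_1$ through $X=A_0\to\cdots\to A_n=Z$. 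Pulling the whole chain back along $g$ produces, by repeated pasting, a chain $g^{*}A_0\to\cdots\to g^{*}A_n$ in which each step is a pullback of some $k_i$ (hence in $K$), which terminates at $g^{*}A_n\cong Y$, and whose total composite is the second projection $X\times_Z Y\to Y$. The canonical morphism $\sigma=(\mathrm{id}_X,f)\colon X\to X\times_Z Y$ is a section of the first projection $X\times_Z Y\to X$, which is a monomorphism (being a pullback of $g$), and a section of a monomorphism is an isomorphism; so $X\cong g^{*}A_0$ and $f$ is the composite of this isomorphism with the chain $g^{*}A_0\to\cdots\to g^{*}A_n\cong Y$, whence $f\in\bar K$.

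Item (2) is then immediate: for $f\colon x\to y$ and a $\tau'$-covering $F$ over $y$, the pullback $f^{*}F$ is a $\tau$-covering because $\tau$ is a coverage, and each $(f^{*}F)(i)$ is a pullback of $F(i)\in\bar K$, hence lies in $\bar K$ by the stability established in item (1); so $f^{*}F$ is again subordinated to $\bar K$, lies in $\tau'_x$, and $\tau'$ is a coverage subordinated to $\bar K$. For item (3), consider a commuting square with left morphism $k\in\bar K$, top $\bar f\in E$ and right $\bar k\in M$, sitting over some $f\in E$; fix a decomposition $k=k_n\circ\cdots\circ k_1$ with $k_i\in K$ and induct on $n$. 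For $n=0$ the morphism $k$ is an identity and the claim is the uniqueness of the $(E,M)$-factorization of $f\in E$. For $n\ge1$ I would build a staircase of $(E,M)$-factorizations: put $e_{n+1}=f$ and recursively factor $e_{j+1}\circ k_j=m_j\circ e_j$ for $j=n,n-1,\dots,1$; each such factorization is a square over the $E$-morphism $e_{j+1}$ with $k_j\in K$ on the left, so the hypothesis forces $m_j\in K$, hence $m_n\circ\cdots\circ m_1\in\bar K$. Telescoping gives $(m_n\circ\cdots\circ m_1)\circ e_1=\bar k\circ\bar f=f\circ k$ with $e_1\in E$ and $m_n\circ\cdots\circ m_1\in M$, so by uniqueness of $(E,M)$-factorizations $\bar k$ and $m_n\circ\cdots\circ m_1$ differ by an isomorphism, giving $\bar k\in\bar K$.

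Finally, for item (4): given $F\in\tau'_c$ and $f\in E$, applying the assumed $(E,M)$-image compatibility of $f$ with $\tau$ to the $\tau$-covering $F$ produces a $\tau$-covering $G$ over $d$ subordinated to $M$ together with a natural transformation $\eta\colon f_{*}F\Rightarrow G$ whose components $\eta_i$ lie in $E$. For each object $i$ the commuting square witnessing that $\eta_i$ is a morphism over $d$ has $F(i)\in\bar K$ on the left, $\eta_i\in E$ on top, $G(i)\in M$ on the right and $f\in E$ on the bottom, so item (3) forces $G(i)\in\bar K$; hence $G$ is subordinated to $\bar K$, so $G\in\tau'_d$, and since $\eta$ still has components in $E$ this shows $f$ is $(E,\bar K)$-image compatible with $\tau'$. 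The only genuinely subtle point in the argument is the left-cancelability in item (1); everything else is bookkeeping with pasting of pullbacks and with the uniqueness of orthogonal factorizations.
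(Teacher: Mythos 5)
Your proof is correct and follows essentially the same route as the paper's: stability of $\bar K$ by induction on composites, left-cancelability from the fact that $f$ is (up to isomorphism) a pullback of $gf$ along the monomorphism $g$, an inductive staircase of $(E,M)$-factorizations for item (3), and a componentwise application of (3) for item (4). The only cosmetic differences are that the paper recognizes the square with the identity on the left directly as a pullback (rather than invoking the section of the first projection), and in (3) it produces the intermediate $M$-morphisms via orthogonality lifts rather than by telescoping and uniqueness of factorizations.
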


\begin{proof}
    \hfill
    \begin{enumerate}
        \item Assume that $K$ is a stable class of morphisms. The class $K$ contains all isomorphism of $C$ and it implies that $\bar{K}$ is a system of $C$ contained in $M$. Since each morphism in $\bar{K}$ is a composition of a finite path of morphisms in $K$, it follows inductively that $\bar{K}$ is stable. 

        It suffices to show that $K$ is left-cancelable in the following strong sense: Let $h\colon a\xrightarrow{f}b\xrightarrow{g}c$ be morphisms in $C$, where $h\in K$ and $g$ is a monic. We show that $f\in K$. Consider the pullback square
        $$
        \begin{tikzcd}
            a \arrow[r, "f"] \arrow[d, "id"'] \arrow[rd, "\lrcorner", phantom, very near start] & b \arrow[d, "g", hook] \\
            a \arrow[r, "h\in K"', hook]                                                        & c               
            \end{tikzcd}
        $$
        Thus $f\in K$ by the stability of $K$. Since $\bar{K}$ is a stable class of monics, it follows that it is left-cancelable.

        \item Assume that $K$ is stable. By previous, $\bar{K}$ is a stable system of $C$. Since $\tau$ is a coverage, it is pullback stable and by the stability of $\bar{K}$ we have that $\tau'$ is pullback stable as well and thus $\tau'$ is a coverage.
        
        \item Assume that $K$ is closed under $(E,M)$-images along morphisms in $E$. Let $f\colon a\to b\in E$ and let $k\colon c\to a$ be a morphism in $\bar{K}$. Since $k\in\bar{K}$, we have that $k$ is the composite of some path $c= a_n\xrightarrow{k_n}\cdots \xrightarrow{k_1} a_0 = a$. We define as the composite $k^i = k_1\cdots k_i$. Consider the factorization
        $$
        \begin{tikzcd}
            a_i \arrow[d, "k^i"', hook] \arrow[r, "\bar{f}_i", two heads] & b_i \arrow[d, "\bar{k}^i", hook] \\
            a \arrow[r, "f"', two heads]                                  & b                               
            \end{tikzcd}
        $$
        where $\bar{f}^i\in E$ and $\bar{k}^i\in M$. We show by induction that $k^i\in\bar{K}$ for each $i\leq n$. By assumption $\bar{k}^1\in K\subset \bar{K}$. Assume that $\bar{k}^i\in \bar{K}$ and $i<n$. We show that $\bar{k}^{i+1}\in\bar{K}$. Consider the commuting diagram
        $$
        \begin{tikzcd}
            a_{i+1} \arrow[d, "k_{i+1}"', hook] \arrow[r, "\bar{f}_{i+1}", two heads] & b_{i+1} \arrow[d, "\bar{k}_{i+1}" description] \arrow[dd, "\bar{k}^{i+1}", hook, bend left] \\
            a_i \arrow[d, "k^i"', hook] \arrow[r, "\bar{f}_i", two heads]             & b_i \arrow[d, "\bar{k}^i" description, hook]                                                \\
            a \arrow[r, "f"', two heads]                                              & b                                                                                          
            \end{tikzcd}
        $$
        where $\bar{k}_{i+1}$ is the morphism induced by the orthogonality $\bar{f}_{i+1}\perp \bar{k}^i$. By the left-cancelability of $M$, we have that $\bar{k}_{i+1}\in M$ and since $\bar{f}_i\in E$, it follows by assumption that $\bar{k}_{i+1}\in K$. Thus $\bar{k}^{i+1}\in\bar{K}$. 
        
        \item Assume that $K$ is closed under $(E,M)$-images along morphisms in $E$ and assume that each morphism in $E$ is $(E,M)$-image compatible with $\tau$. By the previous, $\bar{K}$ is closed under $(E,M)$-images along morphisms in $E$. Assume that $f\colon a\to b\in E$ and $F\in\tau'_a$. Since $f$ is $(E,M)$-image compatible with $\tau$, it follows that the images induce $f_*(F)\in \tau_b$. Since $f_*(F)$ is subordinated to $M$ and $f$ maps $\bar{K}$-subobjects of $a$ to $\bar{K}$-subobject of $b$, it follows that $f_*(F)$ is subordinated to $\bar{K}$. Thus $f_*(F)\in \tau'_b$ and so $f$ is $(E,\bar{K})$-image compatible with $\tau'$. \qedhere
    \end{enumerate}
\end{proof}

\begin{corollary}
    Let $C$ be a homological category, namely $C$ is pointed, regular and protomodular. Assume that the class of normal monomorphisms are closed under images along regular epimorphisms.\footnote{A Homological category is exact iff the images of normal subobjects are normal along regular epis \cite{JANELIDZE2002367}.} Denote by $K$ the subcategory of $C$ generated by the normal monomorphisms of $C$. Let $J$ be a class of diagram types. Consider a short exact sequence
    $$
    0\to a\to b\rightarrow c\to 0
    $$
    in $C$. Then $b$ is $J,K$-compact if and only if $a$ and $c$ are.
\end{corollary}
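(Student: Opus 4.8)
The plan is to realize the corollary as an instance of Theorem~\ref{thm:Closure} applied to the pair $(E,K)$ and a suitably restricted coverage, with Lemma~\ref{lem:StableImageClosure} carrying out the passage from the monomorphism factorization system of the regular category $C$ to the system $K$ generated by its normal monomorphisms.

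First I would fix $E$ to be the class of regular epimorphisms and $M$ the class of monomorphisms of $C$; since $C$ is regular, $(E,M)$ is a stable orthogonal factorization system with $M$ consisting of monomorphisms. Write $N$ for the class of normal monomorphisms of $C$. Since $C$ is pointed, the kernel of any $g\colon y\to z$ is the pullback of $0\to z$ along $g$, so the pasting lemma for pullbacks shows that a pullback of a normal monomorphism is again a normal monomorphism; thus $N$ is a stable class, it contains all isomorphisms, and $N\subseteq M$. By hypothesis $N$ is closed under $(E,M)$-images along the morphisms of $E$. I would then invoke Lemma~\ref{lem:StableImageClosure} with $K:=N$, using as the auxiliary coverage $\tau_{J,M}$ of Example~\ref{ex:DirectedCoverages}: it is a coverage subordinated to $M$, and by Lemma~\ref{lem:weak-images-compatibility} every morphism of $C$, in particular every morphism of $E$, is $(E,M)$-image compatible with it. The lemma then provides: (i) the generated subcategory $\bar N$ --- which is exactly the class $K$ of the statement --- is a stable, left-cancelable system of $C$; (ii) the restriction $\tau'$ of $\tau_{J,M}$ to $K$-subordinated coverings is a coverage subordinated to $K$, and since $K\subseteq M$ this restriction coincides with $\tau_{J,K}$, so that ``$J,K$-compact'' is synonymous with ``$\tau'$-compact''; (iii) every morphism of $E$ is $(E,K)$-image compatible with $\tau'$.

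Next I would read off the structure of the short exact sequence $0\to a\xrightarrow{m}b\xrightarrow{f}c\to 0$: the monomorphism $m$, being the kernel of $f$, is normal, so $m\in N\subseteq K$; the epimorphism $f$, being the cokernel of $m$, is a regular epimorphism, so $f\in E$; and the defining pullback square of the kernel exhibits $a$ as the pullback of $0\to c$ along $f$, with the zero object occupying the top-right corner. The two directions now follow by feeding the data $(E,K,\tau')$ into Theorem~\ref{thm:Closure}. For the forward direction, assume $b$ is $\tau'$-compact: then $a$ is $\tau'$-compact by part~(1) applied to the $K$-subobject $m\colon a\to b$, and $c$ is $\tau'$-compact by part~(2), because $f$ is a regular, hence strong, epimorphism and therefore stably $K$-extremal (regular epimorphisms being pullback-stable in a regular category). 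For the converse, assume $a$ and $c$ are $\tau'$-compact, and apply part~(3) to the pullback square with corners $a,0,b,c$ and bottom edge $f\colon b\to c$: here $a$ and $c$ are $\tau'$-compact by hypothesis, $f\in E$ is $(E,K)$-image compatible with $\tau'$ by (iii), and $E$ satisfies the $K$-protomodularity condition because $C$ is regular protomodular --- so the class $E$ of regular epimorphisms satisfies the full protomodularity condition, and since $K$ is contained in the class of all morphisms this a fortiori yields the $K$-protomodularity condition. Part~(3) then gives that $b$ is $\tau'$-compact.

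The only points that are not pure bookkeeping are the stability of $N$ (and the fact that $N$ contains the isomorphisms), which are handled cleanly by the kernel-as-pullback description, and --- the step I expect to be the main obstacle --- correctly matching the four objects of the short exact sequence with the corners of the pullback square in Theorem~\ref{thm:Closure}(3). One must place the zero object at the top-right corner, which is precisely the corner on which the theorem imposes no compactness hypothesis; the compactness hypotheses there are on the top-left and bottom-right objects, which here are $a$ and $c$, while the conclusion concerns the bottom-left object, which is $b$. Once this matching is set up correctly, the rest is substitution into Lemma~\ref{lem:StableImageClosure} and Theorem~\ref{thm:Closure}.
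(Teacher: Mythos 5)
Your proposal is correct and follows essentially the same route as the paper: fix $E$ the regular epimorphisms and $M$ the monomorphisms, feed the stable class of normal monomorphisms into Lemma~\ref{lem:StableImageClosure} to obtain that $K$ is a stable left-cancelable system with $\tau_{J,K}$ a coverage and every regular epimorphism $(E,K)$-image compatible, then apply Theorem~\ref{thm:Closure}(1),(2) for the forward direction and (3), via the kernel pullback square with the zero object in the top-right corner, for the converse. Your treatment is in fact slightly more careful than the paper's in two places: you justify the pullback-stability of normal monomorphisms via the kernel-as-pullback description, and you correctly source the $K$-protomodularity of $E$ from the protomodularity of $C$ (with $K$ contained in the class of all morphisms) rather than from Lemma~\ref{lem:StableImageClosure}, which does not actually assert it.
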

\begin{proof}
    Let $E$ be the class of regular epimorphisms and let $M$ be the class of monomorphisms in $C$. Note that normal monomorphisms are closed under pullback and contain isomorphisms. By assumption normal monomorphisms are closed under images along reuglar epis. Thus by Lemma~\ref{lem:StableImageClosure}(4), with $K$ the subcategory generated by the normal monomorphisms, we have that $K$ is a stable left-cancelable system, the coverage $\tau_{J,K}$ is subordinated to $K$, every morphism of $E$ is $(E,K)$-image compatible with $\tau_{J,K}$, and $E$ satisfies the $K$-protomodularity condition. Hence Theorem~\ref{thm:Closure} applies to the pair $(E,K)$.

    If $b$ is $J,K$-compact, then Theorem~\ref{thm:Closure}(1,2) gives that $a$ and $c$ are $J,K$-compact, since $a\to b\in K$ and $b\twoheadrightarrow c\in E$.

    Conversely, assume that $a$ and $c$ are $J,K$-compact. In the short exact sequence, $a\hookrightarrow b$ is a normal monomorphism and $b\twoheadrightarrow c$ is a regular epimorphism, and the square is a pullback:
    $$
    \begin{tikzcd}
    a \arrow[r] \arrow[d, hook] \arrow[rd, "\lrcorner", phantom, very near start] & 0 \arrow[d] \\
    b \arrow[r, two heads] & c
    \end{tikzcd}
    $$
    Since $b\to c\in E$ is $(E,K)$-image compatible and $a$ and $c$ are $J,K$-compact, Theorem~\ref{thm:Closure}(3) yields that $b$ is $J,K$-compact.
\end{proof}

\begin{definition}
    Let $C$ be a category with a class of morphisms $E$. We say that an object $c$ is $E$-Hopfian, if any morphism $c\to c\in E$ is an isomorphism. The object $c$ is called Hopfian, if any stably extremal epimorphism $c\to c$ is an isomorphism. 
\end{definition}

\begin{definition}
    Let $C$ be a category with pullbacks and let $\tau$ be a coverage on $C$. Let $E$ and $M$ be classes of morphisms of $C$. We say that the pair $(E,M)$ is \textit{$\tau$-well behaved}, if:
    \begin{enumerate}
        \item $\tau$ is subordinated to a stable and left-cancelable system $M$.
        \item $E$ satisfies the $M$-protomodularity condition and each morphism in $E$ is $(E,M)$-image compatible with $\tau$.
        \item At least one of the following conditions holds:
        \begin{itemize}
            \item All morphisms of $E$ are stably $M$-extremal epimorphisms.
            \item $E$ is a right-cancelable, stable system of $C$ where $E\cap M$ is the class of isomorphisms.
        \end{itemize}
    \end{enumerate}
\end{definition}

\begin{corollary}\label{cor:closureproducts}
    Let $C$ be a category with pullbacks and a coverage $\tau$. Assume that $(E,M)$ is a $\tau$-well behaved pair. Then $\tau$-compact objects are closed under $E$-quotients and $E$-weak extensions. Furthermore, if $C$ is pointed and each product projection $x\times y\to x$ is $(E,M)$-image compatible with $\tau$, then $\tau$-compact objects are closed under binary products.
\end{corollary}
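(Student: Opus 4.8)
The plan is to reduce all three claims to Theorem~\ref{thm:Closure} by unpacking the definition of a $\tau$-well behaved pair $(E,M)$; recall that its condition (1) already supplies the standing hypotheses of Theorem~\ref{thm:Closure}, namely that $M$ is a stable, left-cancelable system and that $\tau$ is subordinated to $M$. For closure under $E$-quotients, let $f\colon a\to b$ lie in $E$ with $a$ being $\tau$-compact, and split on condition (3) of $\tau$-well behavedness. If every morphism of $E$ is a stably $M$-extremal epimorphism, then $f$ is stably $M$-extremal and the first clause of Theorem~\ref{thm:Closure}(2) gives that $b$ is $\tau$-compact. If instead $E$ is a right-cancelable stable system with $E\cap M$ the isomorphisms, then $f$ is stably in $E$ by stability of $E$, and the second clause of Theorem~\ref{thm:Closure}(2) applies; either way $b$ is $\tau$-compact.

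For closure under $E$-weak extensions, suppose we are given a pullback square
$$
\begin{tikzcd}
a \arrow[d, "\bar{\phi}"'] \arrow[r, "\bar{f}"] \arrow[rd, "\lrcorner", phantom, very near start] & d \arrow[d, "\phi"] \\
b \arrow[r, "f"'] & c
\end{tikzcd}
$$
with $a,c$ being $\tau$-compact and $f\in E$. By condition (2) of $\tau$-well behavedness, $E$ satisfies the $M$-protomodularity condition and, since $f\in E$, the morphism $f$ is $(E,M)$-image compatible with $\tau$; these are exactly the hypotheses of Theorem~\ref{thm:Closure}(3), so $b$ is $\tau$-compact.

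For closure under binary products, assume $C$ is pointed with zero object $0$, that each projection $x\times y\to x$ is $(E,M)$-image compatible with $\tau$, and that $x$ and $y$ are $\tau$-compact. The key step is to exhibit $x\times y$ as a weak extension of $x$ by $y$: a direct check with the universal property of the product shows that
$$
\begin{tikzcd}
y \arrow[d, "\langle 0,\mathrm{id}_y\rangle"'] \arrow[r] \arrow[rd, "\lrcorner", phantom, very near start] & 0 \arrow[d] \\
x\times y \arrow[r, "\pi_x"'] & x
\end{tikzcd}
$$
is a pullback, i.e.\ $\langle 0,\mathrm{id}_y\rangle\colon y\to x\times y$ is the kernel of $\pi_x$. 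Then Theorem~\ref{thm:Closure}(3), applied with $a=y$, $c=x$ (both $\tau$-compact), $b=x\times y$, $d=0$, and $f=\pi_x$ --- which is $(E,M)$-image compatible with $\tau$ by hypothesis, while $E$ satisfies $M$-protomodularity by $\tau$-well behavedness --- shows that $x\times y$ is $\tau$-compact.

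The whole argument is essentially bookkeeping on top of Theorem~\ref{thm:Closure}; the only points requiring any thought are matching the two alternatives of condition (3) of $\tau$-well behavedness to the two clauses of Theorem~\ref{thm:Closure}(2), and spotting the kernel-of-projection pullback square in the product case. I do not expect a substantive obstacle --- the mildest subtlety is verifying that this last square really is a pullback in an arbitrary pointed category with pullbacks.
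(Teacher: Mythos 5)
Your proof is correct and follows essentially the same route as the paper: both parts reduce to Theorem~\ref{thm:Closure}(2,3) after unpacking the definition of a $\tau$-well behaved pair, and the product case in the paper uses exactly the pullback square you describe (there written as $a\xrightarrow{\iota_1}a\times b\xrightarrow{\pi_2}b$ over the zero object, which is your kernel-of-projection square up to swapping the factors). Your extra care in matching the two alternatives of condition (3) to the two clauses of Theorem~\ref{thm:Closure}(2), and in verifying that the square is a pullback in any pointed category with products, only makes explicit what the paper leaves implicit.
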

\begin{proof}
    The closure of $\tau$-compact objects under $E$-quotients and $E$-weak extensions follows from Theorem \ref{thm:Closure}(2,3). If $C$ has a zero-object $0$ and the product projections in $C$ are $(E,M)$-image compatible with $\tau$, then the closure under binary products of $\tau$-compact objects is seen by considering the following pullback square for $\tau$-compact objects $a$ and $b$ in $C$ and applying Theorem \ref{thm:Closure}(3):
    $$
\begin{tikzcd}
a \arrow[d, "\iota_1"'] \arrow[r, "!"] \arrow[rd, very near start, phantom, "\lrcorner"] & 0 \arrow[d, "!"] \\
a\times b \arrow[r, "\pi_2"']                     & b               
\end{tikzcd}
    $$
    \end{proof}

\begin{corollary}
    Let $M$ be the class of extremal monomorphisms of $\mathbf{Top}$. Let $\tau$ be any coverage on $\mathbf{Top}$ subordinated by $M$. Let $f\colon X\to Y$ be a surjective continuous map with $X$ $\tau$-compact. Then $Y$ is $\tau$-compact. In particular, if $X$ is compact, countably compact, or Lindelöf, then so is $Y$, respectively. 
\end{corollary}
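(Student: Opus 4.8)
The plan is to deduce this directly from Theorem~\ref{thm:Closure}(2), applied with $M$ the class of extremal monomorphisms of $\mathbf{Top}$ (and $E$ any system, say the isomorphisms, since the relevant clause of part~(2) imposes no condition on $E$). Three hypotheses must be checked: that $M$ is a stable, left-cancelable system; that $\tau$ is a coverage subordinated to $M$; and that $f$ is stably an $M$-extremal epimorphism. The middle point is the standing hypothesis of the corollary, and the first is standard once one recalls that the extremal monomorphisms of $\mathbf{Top}$ are exactly the subspace embeddings: these contain the isomorphisms and are closed under composition, a pullback of an embedding is an embedding, and if $g\circ f$ and $g$ are embeddings then so is $f$ (the domain of $f$ carries the subspace topology pulled back along $g\circ f$, and since $g$ is itself an embedding this coincides with the subspace topology pulled back along $f$).

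The main work is to show that every pullback of $f$ is an $M$-extremal epimorphism. First I would observe that any surjective continuous map $g\colon A\to B$ is already an $M$-extremal epimorphism: if $g=m\circ h$ with $m$ an embedding, then $m$ is surjective because $g$ is, and a surjective embedding is a continuous bijection which is a homeomorphism onto its image---here all of $B$---hence an isomorphism of $\mathbf{Top}$. Next I would use that surjectivity is stable under pullback in $\mathbf{Top}$: the forgetful functor to $\mathbf{Set}$ preserves pullbacks, and in $\mathbf{Set}$ a pullback of a surjection is a surjection. Hence every pullback of the surjection $f$ is again a surjective continuous map, so it is an $M$-extremal epimorphism; thus $f$ is stably an $M$-extremal epimorphism. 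Theorem~\ref{thm:Closure}(2) now yields that $Y$ is $\tau$-compact.

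For the final sentence it remains to recall from Example~\ref{ex:DirectedCoverages}(4) that compactness, countable compactness, and Lindelöfness of a space are instances of $\tau$-compactness for coverages whose coverings are induced from open covers via $A\mapsto \bigcup_{i\in A}U_i\hookrightarrow Y$ (or, in the contravariant variant, from empty-intersection families of closed sets via $A\mapsto\bigcap_{i\in A}C_i\hookrightarrow X$); in either case every value of the covering functor is the inclusion of an open, respectively closed, subset, hence an embedding, hence an extremal monomorphism. So each of these coverages is subordinated to $M$, and the first assertion of the corollary specializes to it. The only point requiring any care is the passage flagged above as the ``main work''---identifying the extremal monomorphisms of $\mathbf{Top}$ with embeddings and checking pullback-stability of surjections---but both facts are classical, so I expect no genuine obstacle.
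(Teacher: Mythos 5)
Your proposal is correct and follows the same route as the paper: both deduce the claim from Theorem~\ref{thm:Closure}(2) after observing that a surjective continuous map is (stably) $M$-extremal for $M$ the embeddings and that the listed topological properties are instances of $\tau$-compactness for coverages subordinated to $M$. You simply spell out the verification (extremal monos in $\mathbf{Top}$ are embeddings, surjections are pullback-stable) that the paper leaves implicit.
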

\begin{proof}
     Since $f$ is $M$-extremal and $\tau$ is a coverage subordinated to a stable system $M$, it follows from Theorem \ref{thm:Closure}(2) that $Y$ is $\tau$-compact. 
\end{proof}

\begin{definition}
    Let $C$ be a category with pullbacks. We say that an object $c$ in $C$ is mono-reflective, if for any morphism $f$ with domain $c$ it holds that $f$ having a monic pullback implies that $f$ is itself monic.
\end{definition}
Notice that in a protomodular category with pullbacks, each object is mono-reflective. Assume that $f\colon x\to y$ has a pullback that is monic. We may extend the pullback square via kernel pairs to attain a new pullback square as follows:
$$
\begin{tikzcd}
a' \arrow[d] \arrow[r, "\pi_2'", shift left] \arrow[r, "\pi_1'"', shift right] \arrow[rd, "\lrcorner", phantom, very near start] & x' \arrow[d] \arrow[rd, "\lrcorner", phantom, very near start] \arrow[r, "f'"] & y' \arrow[d] \\
a \arrow[r, "\pi_1"', shift right] \arrow[r, "\pi_2", shift left]                                                              & x \arrow[r, "f"']                                                              & y           
\end{tikzcd}
$$
Since $f'$ is monic, its kernel pair is a pair of isomorphisms. Thus, the retraction $\pi_1$ has a pullback that is an isomorphism, which implies directly that $\pi_1$ is an isomorphism by protomodularity. Thus, in a protomodular category, all objects are mono-reflective.

\begin{corollary}\label{cor:noeth-implies-hopfian}
    Let $C$ be a category with pullbacks and an initial object $0$. Let $x$ be a Noetherian and mono-reflective object. Assume that the unique morphism $0\to x$ factors $\pi_0e$ where $\pi_0$ is monic and $e$ is epic. Then $x$ is Hopfian, i.e., any stably extremal epimorphism $x\to x$ is an isomorphism.
\end{corollary}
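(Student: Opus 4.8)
The plan is to deduce the statement from the Noetherian Pullback-Theorem (Theorem~\ref{thm:Hopfian}) applied with $M$ the class of monomorphisms of $C$: this is a stable system, $M$-Noetherian coincides with Noetherian, and $M$-extremal epimorphisms are exactly the extremal epimorphisms. So fix a stably extremal epimorphism $f\colon x\to x$; we must show it is an isomorphism.

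First I would check the three hypotheses of Theorem~\ref{thm:Hopfian}. Being $M$-Noetherian is assumed. Since the stably extremal epimorphisms form a system (stated in the excerpt to be a stable, right-cancelable system), $f^n$ is a stably extremal epimorphism for every $n\in\N$; as the class $S$ of sections sits inside the monomorphisms, every $M$-extremal epimorphism is $S$-extremal and this is inherited by pullbacks, so each $f^n$ is stably $S$-extremal. For the fixed point, write the unique morphism $0\to x$ as $\pi_0 e$ with $\pi_0\colon I_0\to x$ monic and $e$ epic; then $f\pi_0 e\colon 0\to x$ must be the unique such morphism by initiality of $0$, that is $f\pi_0 e=\pi_0 e$, and cancelling the epimorphism $e$ gives $f\pi_0=\pi_0$. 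Thus $\pi_0\in M$ is a fixed point of $f$ and Theorem~\ref{thm:Hopfian} applies, yielding that the pullback of $f$ along $\pi_0$ is an isomorphism.

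It remains to conclude that $f$ is an isomorphism. The pullback of $f$ along $\pi_0$ is a pullback of $f$ and it is an isomorphism, hence a monomorphism; since $x$ is mono-reflective and is the domain of $f$, it follows that $f$ is a monomorphism. Finally, a morphism that is simultaneously a monomorphism and an extremal epimorphism is an isomorphism --- factor $f=f\circ\mathrm{id}_x$ and apply extremality with the monic part $f$ --- so $f$ is an isomorphism and $x$ is Hopfian. I do not expect a genuine obstacle here: the proof is a repackaging of Theorem~\ref{thm:Hopfian}, and the only subtleties are that the $S$-extremality hypothesis of that theorem must be secured for all powers $f^n$ at once (using closure of stably extremal epimorphisms under composition) and that mono-reflectivity of $x$ is precisely the tool that upgrades ``$f$ has an invertible pullback'' to ``$f$ is monic''.
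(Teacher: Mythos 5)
Your proposal is correct and follows essentially the same route as the paper: apply Theorem~\ref{thm:Hopfian} with $M$ the monomorphisms, obtain the fixed point $f\pi_0=\pi_0$ by initiality and cancelling the epimorphism $e$, then use mono-reflectivity to upgrade the invertible pullback to ``$f$ is monic'' and conclude since a monic extremal epimorphism is an isomorphism. The paper's proof is just a terser version of yours; your added verifications (that each $f^n$ is stably $S$-extremal because sections are monomorphisms and stably extremal epimorphisms compose) are exactly the details it leaves implicit.
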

\begin{proof}
    By Theorem \ref{thm:Hopfian}, the pullback of $f$ along $\pi_0$ is an isomorphism. Note that $\pi_0$ is a fixed point of $f$, since $e$ is an epimorphism. Since $x$ is mono-reflective, it follows that $f$ is monic and hence an isomorphism.
\end{proof}

\begin{corollary}\label{cor:noeth-regular-protomod-initial}
    Let $C$ be a regular protomodular category with an initial object $0$. Then the Noetherian objects of $C$ are Hopfian and closed under subobjects, regular quotients and weak extensions. Moreover, if $C$ is pointed, then the Noetherian objects of $C$ are closed under binary products.
\end{corollary}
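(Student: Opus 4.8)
The plan is to specialize the machinery of Section~\ref{sec:DirectedCoverageCompactnessNoetherianness} to the orthogonal factorization system $(E,M)$ on $C$ in which $E$ is the class of regular epimorphisms and $M$ the class of monomorphisms, and to the coverage $\tau_M = \tau_{J,M}$ where $J$ consists of the single diagram type given by the poset $\N$ with the covariant variance and every natural number designated small. By Example~\ref{ex:DirectedCoverages}(1) the $\tau_M$-compact objects are precisely the Noetherian objects of $C$. The first step is to record the structural facts we need. Since $C$ is regular, $(E,M)$ is a stable orthogonal factorization system, $M$ is a stable and left-cancelable system, $E\cap M$ consists of isomorphisms, every regular epimorphism is stably $M$-extremal, and $E$ is itself a stable, right-cancelable system. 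Since $C$ is protomodular, $E$ satisfies the protomodularity condition (recalled in Section~\ref{sec:Systems}), hence a fortiori the $M$-protomodularity condition, and every object of $C$ is mono-reflective (recalled just before Corollary~\ref{cor:noeth-implies-hopfian}). Finally, by Lemma~\ref{lem:weak-images-compatibility}, \emph{every} morphism of $C$, not merely those in $E$, is $(E,M)$-image compatible with $\tau_M$. Together these say, in particular, that $(E,M)$ is a $\tau_M$-well behaved pair.

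With this in hand the closure assertions follow by citing the earlier results. Closure under subobjects is Theorem~\ref{thm:Closure}(1) applied with the diagram type $J$, since $M$-subobjects are exactly subobjects. Closure under regular quotients (that is, under $E$-quotients) and under $E$-weak extensions, together with closure under binary products when $C$ is pointed, is Corollary~\ref{cor:closureproducts}; for the product clause one uses that a pointed $C$ with initial object $0$ has $0$ as a zero object and that every product projection is $(E,M)$-image compatible with $\tau_M$. Moreover, because \emph{all} morphisms of $C$ are $(E,M)$-image compatible with $\tau_M$, Theorem~\ref{thm:Closure}(3) in fact applies with an arbitrary morphism along the base of the pullback square, which is exactly closure under weak extensions in the unqualified sense of the paper's definition.

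For the Hopfian statement the plan is to invoke Corollary~\ref{cor:noeth-implies-hopfian}: a Noetherian object $x$ is mono-reflective by protomodularity, and regularity provides a factorization of the unique morphism $0\to x$ as a regular epimorphism followed by a monomorphism, with the regular epimorphism in particular an epimorphism; so the hypotheses of Corollary~\ref{cor:noeth-implies-hopfian} are met and every stably extremal epimorphism $x\to x$ is an isomorphism, i.e.\ $x$ is Hopfian. I expect no genuine obstacle here; the work is bookkeeping: confirming that the coverage defining Noetherianness has the form $\tau_{J,M}$ to which Lemma~\ref{lem:weak-images-compatibility} and the ``$\tau$-well behaved'' hypotheses apply, verifying the elementary facts about regular epimorphisms in a regular category (stable $M$-extremality, right-cancelability, and $E\cap M$ being the isomorphisms), and noting that it is the universal image-compatibility enjoyed by $\tau_M$ that upgrades $E$-weak extensions to weak extensions. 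Assembling these already-established facts correctly is the main thing to watch.
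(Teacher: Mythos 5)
Your proposal is correct and follows essentially the same route as the paper: specialize to the regular-epi/mono factorization system, verify it is $\tau_M$-well behaved (using protomodularity for the $M$-protomodularity condition and Lemma~\ref{lem:weak-images-compatibility} for image compatibility), and then invoke Theorem~\ref{thm:Closure} (via Corollary~\ref{cor:closureproducts}) together with Corollary~\ref{cor:noeth-implies-hopfian}. Your write-up is in fact more explicit than the paper's about why arbitrary morphisms (not just regular epis) are image-compatible, which is what justifies the unqualified ``weak extensions'' claim.
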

\begin{proof}
    Consider the classes $E$ of regular epimorphisms and $M$ of monomorphisms. The class $E$ satisfies the $M$-protomodularity condition \cite{BorceuxBourn2004}. Thus $(E,M)$ is a stable, protomodular and orthogonal factorization system on $C$. The rest then follows by an application of Theorem \ref{thm:Closure} and Corollary \ref{cor:noeth-implies-hopfian}.
\end{proof}

\begin{corollary}\label{cor:compactnessinabeliansetting}
    Let $C$ be an abelian category with $M$ being the class of monomorphisms of $C$. Consider a class $J$ of diagram types. Then the full subcategory of $\tau_{J,M}$-compact objects forms an exact full abelian subcategory of $C$. In particular, the full subcategories of Artinian and Noetherian objects of $C$ are abelian categories.
\end{corollary}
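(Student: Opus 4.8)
The plan is to realise the $\tau_{J,M}$-compact objects of $C$ as the objects of an exact abelian subcategory, by checking closure under kernels, cokernels and finite biproducts, each being an instance of one part of Theorem~\ref{thm:Closure}. To prepare, recall that an abelian category is regular, protomodular and pointed, with a zero object $0$ that is both initial and terminal. I would take $M$ to be the monomorphisms and $E$ the epimorphisms of $C$; in an abelian category every epimorphism is regular, so $(E,M)$ is the stable orthogonal image factorization system, $M$ is a stable and left-cancelable system, and $\tau_{J,M}$ is subordinated to $M$ by construction (Example~\ref{ex:DirectedCoverages}). Since $C$ is regular protomodular, $E$ satisfies the $M$-protomodularity condition \cite{BorceuxBourn2004}; by Lemma~\ref{lem:weak-images-compatibility} every morphism of $C$, in particular every morphism of $E$ and every product projection, is $(E,M)$-image compatible with $\tau_{J,M}$; and every epimorphism of $C$ is a stably $M$-extremal epimorphism (a factorization through a monomorphism forces that monomorphism to be an epimorphism, hence an isomorphism, and this persists under pullback). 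Hence $(E,M)$ is a $\tau_{J,M}$-well behaved pair, and Theorem~\ref{thm:Closure} and Corollary~\ref{cor:closureproducts} apply to it.

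From these I would read off the closure properties of the full subcategory $\mathcal{A}$ of $\tau_{J,M}$-compact objects. The zero object lies in $\mathcal{A}$: any covering of $0$ subordinated to $M$ consists of monomorphisms into $0$, hence of zero objects, so it is essentially constant and stabilizes immediately. Theorem~\ref{thm:Closure}(1) yields closure under $M$-subobjects, i.e.\ arbitrary subobjects; Theorem~\ref{thm:Closure}(2) yields closure under $E$-quotients, i.e.\ arbitrary quotients; and Theorem~\ref{thm:Closure}(3), applied to the pullback square
$$
\begin{tikzcd}
a' \arrow[r] \arrow[d, hook] \arrow[rd, "\lrcorner", phantom, very near start] & 0 \arrow[d] \\
b' \arrow[r, "e", two heads] & c'
\end{tikzcd}
$$
attached to a short exact sequence $0\to a'\to b'\xrightarrow{e}c'\to 0$ (a pullback since $\ker e = a'$), yields closure under extensions; taking the split sequence $0\to a\to a\oplus b\to b\to 0$ then gives closure under finite biproducts, which is also Corollary~\ref{cor:closureproducts}.

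To conclude, a kernel of a morphism of $\mathcal{A}$ is a subobject of its source and a cokernel is a quotient of its target, so $\mathcal{A}$ is closed under kernels and cokernels of its morphisms; together with $0\in\mathcal{A}$ and closure under biproducts, the standard characterization of exact abelian subcategories then shows that $\mathcal{A}$ inherits an abelian structure for which the inclusion $\mathcal{A}\hookrightarrow C$ is exact, i.e.\ $\mathcal{A}$ is an exact full abelian subcategory of $C$. The last assertion follows by specializing $J$ to the singleton consisting of the poset $\N$ with all elements designated small, taken with the covariant variance (Noetherian objects) or the contravariant variance (Artinian objects), as in Example~\ref{ex:DirectedCoverages}.

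I do not expect a genuine obstacle, since Theorem~\ref{thm:Closure} carries the weight; the only new checks are that abelian epimorphisms are stably $M$-extremal and that Lemma~\ref{lem:weak-images-compatibility} applies, both immediate, plus the purely formal passage from closure under subobjects, quotients and extensions to the notion of exact abelian subcategory. The one point worth a remark is that a diagram type with empty set of designated small objects would make $\mathcal{A}$ empty, hence not abelian; this is excluded as soon as the members of $J$ are required to possess at least one designated small object, which is anyway what makes $\tau_{J,M}$-compactness non-vacuous and is automatic in the Noetherian and Artinian cases.
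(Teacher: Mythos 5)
Your proposal is correct and follows essentially the same route as the paper: the paper simply notes that an abelian category is pointed, regular and protomodular and then invokes Corollary~\ref{cor:noeth-regular-protomod-initial} (itself an application of Theorem~\ref{thm:Closure} and Corollary~\ref{cor:closureproducts} with $E$ the regular epimorphisms and $M$ the monomorphisms) to get closure under subobjects, quotients and biproducts, leaving the passage to an exact abelian subcategory as a routine verification. Your write-up merely unwinds that citation, and your closing caveat about diagram types with an empty set of designated small objects is a legitimate (if tacit in the paper) hypothesis needed for the zero object to be $\tau_{J,M}$-compact.
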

\begin{proof}
    Since $C$ is abelian, it is pointed, regular and protomodular. Therefore, $\tau_{J,M}$-compact objects are closed under subobjects, quotients and biproducts by Corollary \ref{cor:noeth-regular-protomod-initial}. It is a straightforward verification that the full subcategory of $C$ generated by $\tau_{J,M}$-compact objects is an exact subcategory.
\end{proof}

\begin{corollary}
    Let $C$ be a category with finite limits and a zero-object $0$. Let $x$ and $y$ be objects of $C$ with $x$ being Noetherian and mono-reflective. If $x\cong x\times y$, then $y\cong 0$. In particular, if in addition $C$ is regular and protomodular, then $x^n\cong x^m$ implies that either $x\cong 0$ or $n = m$ for $n,m\in \N$.
\end{corollary}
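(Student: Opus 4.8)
The plan is to derive the first assertion from the Hopfian property of Corollary~\ref{cor:noeth-implies-hopfian} applied to $x$, and then to deduce the numerical statement from it, using that in a pointed regular protomodular category the Noetherian objects are closed under binary products. The step I expect to require the most care is the observation that the product projection $x\times y\to x$ is a \emph{stably} extremal epimorphism, not merely an extremal one; this is exactly what licenses the appeal to Corollary~\ref{cor:noeth-implies-hopfian}, everything else being bookkeeping with standard facts about zero objects.

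First I would check that Corollary~\ref{cor:noeth-implies-hopfian} applies to $x$: the zero object $0$ is terminal, so the unique morphism $0\to x$ is a monomorphism and it factors as $(0\to x)\circ\mathrm{id}_0$ with $\mathrm{id}_0$ epic; since $C$ has pullbacks and an initial object and $x$ is Noetherian and mono-reflective, the corollary yields that $x$ is Hopfian, i.e.\ every stably extremal epimorphism $x\to x$ is an isomorphism. Next I would fix an isomorphism $\alpha\colon x\xrightarrow{\ \sim\ }x\times y$ and set $g\coloneqq\pi_1\circ\alpha\colon x\to x$, where $\pi_1\colon x\times y\to x$ is the first projection. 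The projection $\pi_1$ is a retraction, split by $\langle\mathrm{id}_x,0\rangle$ with $0\colon x\to y$ the zero morphism; a pullback of a retraction along any morphism is again a retraction (pull back the section); and every retraction is an extremal epimorphism, since a morphism that is both monic and split epic is invertible. Hence $\pi_1$, and therefore $g=\pi_1\alpha$, is a stably extremal epimorphism, so by Hopficity $g$ is an isomorphism, whence $\pi_1=g\circ\alpha^{-1}$ is an isomorphism. Finally the pullback of $0\to x$ along $\pi_1$ is, up to isomorphism, the object $y$, exhibited by $\langle 0,\mathrm{id}_y\rangle\colon y\hookrightarrow x\times y$, whereas the pullback of $0\to x$ along an isomorphism is $0$; therefore $y\cong 0$, proving the first assertion.

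For the ``in particular'' I would assume in addition that $C$ is regular and protomodular. Then $C$ is pointed (it has a zero object), so Corollary~\ref{cor:noeth-regular-protomod-initial} gives that Noetherian objects are closed under binary products; since $x^0=0$ is trivially Noetherian, induction shows $x^n$ is Noetherian for every $n\in\N$, and every object of $C$ is mono-reflective by protomodularity, so the first assertion is available with $x^n$ in the role of $x$. Given $n,m\in\N$ with $x^n\cong x^m$, assume $n\le m$ and write $m=n+k$; then $x^n\cong x^m\cong x^n\times x^k$, so $x^k\cong 0$ by the first assertion. If $k=0$ then $n=m$; if $k\ge 1$, then $x$ is a retract of $x^k$ (via the first projection and the section $\langle\mathrm{id}_x,0,\dots,0\rangle$), hence a retract of $0$, and a retract of a zero object is a zero object, so $x\cong 0$. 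This yields the asserted dichotomy.
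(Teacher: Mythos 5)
Your proof is correct and follows essentially the same route as the paper: apply the Hopfian property from Corollary~\ref{cor:noeth-implies-hopfian} to the retraction $x\cong x\times y\xrightarrow{\pi_1}x$ to conclude $\pi_1$ is invertible, deduce $y\cong 0$, and then reduce $x^n\cong x^m$ to the first assertion via closure of Noetherian objects under binary products (Corollary~\ref{cor:noeth-regular-protomod-initial}). The only differences are cosmetic: you extract $y\cong 0$ from the kernel of $\pi_1$ where the paper notes $\mathrm{id}_y=0_y$, and you handle the final step via ``a retract of $0$ is $0$'' where the paper counts morphisms into $a\times b\cong 0$.
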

\begin{proof}
    Consider the retraction $x\cong x\times y\xrightarrow{\pi_1}x$. Since $x$ is Noetherian and mono-reflective and $C$ is pointed, then, by Corollary \ref{cor:noeth-implies-hopfian}, $\pi_1$ is an isomorphism as it is a retraction. Thus the zero-morphism $y\to y$ is equal to the identity and so $y\cong 0$.

    Assume then that $C$ is regular and protomodular. Corollary \ref{cor:noeth-regular-protomod-initial} shows that the products $x^n,n\in\N$, are Noetherian. Assume that $x^m\times x^k\cong x^m$. Since $x^m$ is Noetherian, it follows that $x^k\cong 0$ by the previous part.
    
    The rest follows from the fact that $a\times b\cong 0$ implies that $a,b\cong 0$ for all objects $a$ and $b$ in $C$: Assume $a\times b\cong 0$ in $C$ and there are multiple morphisms $a\to a$. Then there are multiple morphisms $a\to a\times b$, which lead to a contradiction. So $id_a = 0_a\colon a\to a$ and thus $a\cong 0$. Similarly, $b\cong 0$.
\end{proof}

\begin{corollary}
    Let $R$ be a (left) Noetherian ring. Then the finitely generated $R$-modules are exactly the Noetherian $R$-modules.
\end{corollary}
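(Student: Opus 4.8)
The plan is to recognize the category of (left) $R$-modules, which I will write $R\text{-}\mathbf{Mod}$, as an abelian category with a zero object, and then feed it into Corollary~\ref{cor:noeth-regular-protomod-initial}. First I would record the translation of terms: $R\text{-}\mathbf{Mod}$ is abelian, hence pointed, regular, and protomodular, with zero object the trivial module; the class $M$ of monomorphisms is exactly the injective $R$-linear maps; so a categorical Noetherian object (a $\tau_M$-compact object for $M$ the monomorphisms, as in Example~\ref{ex:DirectedCoverages}(1)) is precisely a module whose poset of submodules satisfies the ascending chain condition, i.e.\ a Noetherian module in the classical sense. I would also note that in $R\text{-}\mathbf{Mod}$ every epimorphism is a regular epimorphism, so ``closed under regular quotients'' means ``closed under all quotient modules''.

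For the implication that every finitely generated module is Noetherian, I would argue as follows. By hypothesis $R$, viewed as a left module over itself, is Noetherian. Since $R\text{-}\mathbf{Mod}$ is pointed, Corollary~\ref{cor:noeth-regular-protomod-initial} gives that Noetherian objects are closed under binary products; starting from $R^1 = R$ and using $R^{n+1}\cong R^n\times R$, an induction shows $R^n$ is Noetherian for every $n\in\N$. If $N$ is generated by $n$ elements $x_1,\dots,x_n$, the map $R^n\to N$ sending $(r_1,\dots,r_n)$ to $\sum_i r_i x_i$ is an epimorphism, hence a regular epimorphism, so $N$ is a regular quotient of the Noetherian object $R^n$; by the same corollary $N$ is Noetherian.

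For the converse, that every Noetherian $R$-module $N$ is finitely generated, I would invoke the classical submodule argument, which does not use any of the machinery above (and indeed holds over an arbitrary ring). The family of finitely generated submodules of $N$ is nonempty, as it contains $0$, so by the ascending chain condition it has a maximal element $N_0$. If $N_0\neq N$, choose $x\in N\setminus N_0$; then $N_0 + Rx$ is a finitely generated submodule strictly containing $N_0$, contradicting maximality. Hence $N_0 = N$, so $N$ is finitely generated.

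The only step that is not pure bookkeeping is the converse direction, and even there the obstacle is mild: one must use the ascending chain condition to extract a maximal finitely generated submodule (equivalently, run a short Zorn-type argument), after which the contradiction is immediate. The forward direction reduces entirely to Corollary~\ref{cor:noeth-regular-protomod-initial} once one has checked that the categorical Noetherian objects of $R\text{-}\mathbf{Mod}$ coincide with the usual Noetherian modules and that every quotient is a regular quotient.
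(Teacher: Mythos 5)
Your proposal is correct and follows essentially the same route as the paper: the forward direction deduces Noetherianness of $R^n$ and then of its quotient $N$ from Corollary~\ref{cor:noeth-regular-protomod-initial}, and the converse is the classical elementary argument. The only cosmetic difference is that you extract a maximal finitely generated submodule from the ascending chain condition, whereas the paper builds an explicit non-stabilizing ascending chain of finitely generated submodules; these are interchangeable forms of the same argument.
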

\begin{proof}
If $M$ is a finitely generated $R$-module, then there is a surjection $R^n\to M$ for some $n\in \N$. Since Noetherian objects are closed under images and products by Corollary~\ref{cor:noeth-regular-protomod-initial}, $M$ is Noetherian. Conversely, if $M$ is a Noetherian module that is not finitely generated, then we may pick a sequence $(m_i)_{i\in\mathbb{N}}$ of elements of $M$ such that $m_i$ is not in the submodule $M_i$ generated by $(m_j)_{j<i}$ for $i\in\N$. As the sequence of submodules $(M_i)_{i\in\N}$ is increasing and non-stabilizing, a contradiction follows. Hence, $M$ is finitely generated.
\end{proof}

% \pagebreak
\section*{Appendix}
In this appendix, we show that the M-protomodularity condition, which is central to our main closure theorems, is not limited to the standard factorization systems found in regular categories. For an algebraic theory $T$ and a choice of a binary term $t$, we construct pairs $(E, M)$ of morphism classes within the category of $T$-algebras, where $E$ satisfies the $M$-protomodularity condition.

We introduce a class $E$ of (weak/strong) $t$-uniformity for morphisms of $T$-algebras. However, it is unlikely that $E$ is a part of a factorization system. The notion of $t$-uniformity of a morphism $f\colon X\to Y$ of $T$-algebras forces the equivalence classes of the kernel relation $\sim_f$, $x\sim_f y$ iff $f(x) = f(y)$ for $x,y\in X$, to be strongly correlated. In particular, it allows us to recover the property that a $t$-uniform morphism $f\colon M\to N$ is injective if its restriction $f|_{f^{-1}(I_N)}$ is injective, where $I_N$ is the minimal subalgebra of $N$.

\begin{definition}
    Let $T$ be an algebraic theory with a term $t(x,y)$, denoted $xy$. By $I_M$, \textit{the structural constants,} we refer to the smallest subalgebra of a $T$-algebra $M$. A \textit{normal subalgebra} of $M$ is an inverse-image of $I_N$ along some $T$-algebra morphism $M\to N$. We say that a $T$-algebra morphism $f\colon M\to N$ is
        \begin{enumerate}
        \item \textit{weakly $t$-uniform}, if for any $m_1,m_2\in M$,
        $$
        f(m_1) = f(m_2) \text{ implies that } m_i = mk_i, i = 1,2,
        $$
        for some $m\in M$ and $k_1,k_2\in f^{-1}(I_N)$.
        \item $t$-\textit{uniform}, if for any $m,m'\in M$,
        $$
        f(m) = f(m') \text{ implies that } m = m'k
        $$
        for some $k\in f^{-1}(I_N)$.

        \item \textit{strongly $t$-uniform}, if 
        $$
        mf^{-1}(I) = f^{-1}(f(m) I)
        $$
        for all $m\in M$ and normal subalgebras $I\leq N$. 

        \item \textit{$t$-cancelative}, if 
        $$
        f(m)f(m_1) = f(m)f(m_2) \text{ implies that } f(m_1) = f(m_2)
        $$
        for $m,m_1,m_2\in M$. We call $f$ weakly $t$-cancellative, if the previous condition holds for $m_1,m_2\in f^{-1}(I_N)$.
    \end{enumerate}
\end{definition}
\begin{remark}
These notions of $t$-uniformity are designed to capture different aspects of how the kernel relation $\sim_f$ of a morphism $f\colon M\to N$ of $T$-algebras interacts with the algebraic structure. Weak $t$-uniformity generalizes the Schreier retractions between monoids, while strong $t$-uniformity ensures compatibility with normal subalgebras. The main idea is that the equivalence classes of $\sim_f$ are strongly correlated, when $f$ is $t$-uniform. If $T$ is the theory of groups, then the equivalence classes $\sim_f$ consists of translations of the equivalence class of the identity element. These different notions of $t$-uniformity of $f$ forces certain kind of uniformity among the equivalence classes of $\sim_f$.
\end{remark}
In the case that the binary term $t(x,y)$ is right unital, in the sense that $t(x,e) \approx e$ is provable from $T$, where $e$ is a nullary term, we have an implication chain $3\Rightarrow 2\Rightarrow 1$ in the definition above. If $T$ is a pointed theory, then every morphism of $T$-algebras is weakly $t$-cancelative.

\begin{theorem}\label{thm:properties-of-uniform-morphisms}
    Let $T$ be an algebraic theory with a binary term $t(x,y)$. We denote by $I_M$ the minimal subalgebra of a $T$-algebra $M$; we also call $I_M$ the subalgebra of structural constants. Then the following assertions hold:
    \begin{enumerate}
        \item The class of surjective strong $t$-uniform morphisms forms a right-cancelable system.
        \item Let $f\colon G\to N$ be a $T$-algebra morphism, where for each $g_1,g_3\in G$ there exists $g_2\in G$ so that $g_1g_2 = g_3$ and if $f(g_1) k = f(g_3)$, then $k = f(g_2)$ for any $k\in N$. Then $f$ is a stably strong $t$-uniform morphism.
        \item Consider a commutative rectangle in $T$-\textbf{Alg}:
        $$
                    \begin{tikzcd}
f'^{-1}(I_{N'}) \arrow[d, "\alpha"'] \arrow[r, "k'", hook] & M' \arrow[d, "\beta" description] \arrow[r, "f'"] & N' \arrow[d, "\gamma"] \\
f^{-1}(I_N) \arrow[r, "k"',hook]                                                                         & M \arrow[r, "f"']                                 & N                     
\end{tikzcd}
        $$
        If $f',\alpha$ and $\gamma$ are surjections and $f$ a $t$-uniform surjection, then $\beta$ is a surjection. Moreover, if $\alpha$ and $\gamma$ are injections, $\beta$ $t$-cancelative and $f'$ weakly $t$-uniform, then $\beta$ is injective. We may weaken $\beta$ to be weakly $t$-cancelative in the previous statement, if $f$ is injective.
    \end{enumerate}
    \end{theorem}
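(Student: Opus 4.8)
The plan is to treat the three assertions in turn; they are largely independent, and each one reduces to unwinding the appropriate flavour of $t$-uniformity. The recurring mechanism is that a (weakly, plain, or strongly) $t$-uniform morphism $f\colon M\to N$ lets one \emph{correct} elements: when $f(m)$ and $f(m')$ are suitably comparable, $m'$ equals $m$ right-multiplied by an element of the ``kernel'' $f^{-1}(I_N)$. For $(1)$ I would first observe that an isomorphism is strongly $t$-uniform because its inverse is again a homomorphism, and that for any homomorphism the inclusion $mf^{-1}(I)\subseteq f^{-1}(f(m)I)$ is automatic, so strong $t$-uniformity is exactly the reverse inclusion. Closure under composition and right-cancelation would then follow from the identity $(gf)^{-1}(I)=f^{-1}(g^{-1}(I))$ together with the remark that the preimage of a normal subalgebra under a homomorphism is again normal (pull back the morphism witnessing normality). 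Chaining the strong $t$-uniformity equalities for $g$ at $f(m)$ and for $f$ at $m$ gives $m(gf)^{-1}(I)=f^{-1}\!\big(f(m)\,g^{-1}(I)\big)=f^{-1}\!\big(g^{-1}(gf(m)\,I)\big)=(gf)^{-1}(gf(m)\,I)$; for right-cancelation of $g$ from $gf$ and $f$, I would run the same chain at $n=f(m)$, using surjectivity of $f$ to express every $n$ this way, and then apply $f$, which is onto, to strip the outer $f^{-1}$.

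For $(2)$ I would present the pullback of $f$ along an arbitrary $h\colon M\to N$ concretely as $\pi_1\colon P\to M$, with $P=M\times_N G$ a $T$-subalgebra of $M\times G$, and verify that $\pi_1$ is strongly $t$-uniform directly. Fixing $(m,g)\in P$ and a subalgebra $J\le M$, only the inclusion $\pi_1^{-1}(mJ)\subseteq (m,g)\,\pi_1^{-1}(J)$ requires proof: given $(m'',g'')\in P$ with $m''=mj$ and $j\in J$, apply the hypothesis on $f$ to the pair $(g,g'')$ to obtain $g_2\in G$ with $gg_2=g''$ and with $f(g_2)$ the unique solution of $f(g)\cdot x=f(g'')$; since $h(m'')=h(m)h(j)=f(g)\,h(j)$ and also $h(m'')=f(g'')$, the element $h(j)$ solves that equation, hence $h(j)=f(g_2)$, so $(j,g_2)\in P$, $j\in J$, and $(m,g)(j,g_2)=(mj,gg_2)=(m'',g'')$. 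Taking $h=\mathrm{id}_N$, so that $\pi_1$ is $f$ itself up to isomorphism, recovers strong $t$-uniformity of $f$, and the same argument for every $h$ gives ``stably''.

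For $(3)$, surjectivity of $\beta$ is a diagram chase: for $m\in M$, pull $f(m)$ back through the surjection $\gamma$ to some $n'\in N'$ and then through the surjection $f'$ to some $m_0'\in M'$, so that $f\beta(m_0')=\gamma f'(m_0')=f(m)$; $t$-uniformity of $f$ then gives $m=\beta(m_0')\cdot\ell$ with $\ell\in f^{-1}(I_N)$, and lifting $\ell$ through the surjection $\alpha$ to $\ell'\in f'^{-1}(I_{N'})$, using $\beta k'=k\alpha$ so that $\beta$ restricts to $\alpha$ on $f'^{-1}(I_{N'})$, yields $\beta(m_0'\ell')=\beta(m_0')\cdot\ell=m$. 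For injectivity, $\beta(m_1')=\beta(m_2')$ forces $f'(m_1')=f'(m_2')$ by injectivity of $\gamma$; weak $t$-uniformity of $f'$ writes $m_i'=m'k_i'$ with $k_i'\in f'^{-1}(I_{N'})$, so $\beta(m')\beta(k_1')=\beta(m')\beta(k_2')$, and $t$-cancelativity of $\beta$ gives $\beta(k_1')=\beta(k_2')$; since $k$ is monic and $\beta k'=k\alpha$, this forces $\alpha(k_1')=\alpha(k_2')$, hence $k_1'=k_2'$ by injectivity of $\alpha$, and therefore $m_1'=m'k_1'=m'k_2'=m_2'$. For the variant in which $\beta$ is only weakly $t$-cancelative, the extra ingredient is that if $f$ is injective then $f^{-1}(I_N)=I_M$: the inclusion $I_M\subseteq f^{-1}(I_N)$ always holds, and injectivity together with $f(I_M)=I_N$ gives the reverse; consequently the elements $\beta(k_i')=\alpha(k_i')$ lie in $I_M$, i.e.\ $k_i'\in\beta^{-1}(I_M)$, which is exactly the range on which weak $t$-cancelativity of $\beta$ applies, and the remainder of the argument is unchanged.

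I expect the main obstacle to be the bookkeeping about where the correcting kernel-elements live, rather than any single deep step: that $\beta$ restricts to $\alpha$ on the relevant normal subalgebras, that preimages of normal subalgebras are normal in $(1)$, and above all the observation in the weakly $t$-cancelative case of $(3)$ that injectivity of $f$ collapses $f^{-1}(I_N)$ onto the structural constants $I_M$, which is precisely what makes a hypothesis restricted to $\beta^{-1}(I_M)$ strong enough to conclude. The remaining work — that $M\times_N G$ is a $T$-subalgebra, the automatic inclusions in the strong $t$-uniformity equations, and the surjectivity chase in $(3)$ — is routine.
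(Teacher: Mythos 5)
Your proposal is correct and follows essentially the same route as the paper's proof: the same inverse-image chain $m(gf)^{-1}(I)=f^{-1}(f(m)g^{-1}(I))=(gf)^{-1}(gf(m)I)$ for composition and right-cancelation in (1), the same explicit element-wise verification on the pullback $M\times_N G$ in (2), and the same surjectivity chase and cancelation argument in (3), including the key observation that injectivity of $f$ collapses $f^{-1}(I_N)$ to $I_M$ so that weak $t$-cancelativity of $\beta$ suffices. I see no gaps.
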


    \begin{proof}\hfill
        \begin{enumerate}
            \item For closure under composition, consider $t$-uniform morphisms $M\xrightarrow{f} N\xrightarrow{g}P$. Let $m\in M$ and let $I\leq N$ be a normal subalgebra. We show that $m (gf)^{-1}(I) =(gf)^{-1}(gf(m)I)$, the converse is immediate. Now
            \begin{align*}
                m(gf)^{-1}(I)
                &= mf^{-1}(g^{-1}(I))\tag{Functoriality of the inverse-images}\\
                &=f^{-1}(f(m)g^{-1}(I))\tag{$g^{-1}(I)$ is a normal subalgebra}\\
                &=f^{-1}(g^{-1}(gf(m)I))\tag{$g$ strongly $t$-uniform}\\
                &= (gf)^{-1}(gf(m) I ).\tag{Functoriality of inverse-image}
            \end{align*}
            
            For closure under isomorphisms, let $f\colon M\to N$ be an isomorphism of $T$-algebras. Let $m\in M$ and $I\leq N$. We show that $f^{-1}(f(m)I)\subset mf^{-1}(I)$. Let $m'\in M$ and $k\in I$ so that $f(m') = f(m)k$. By surjectivity, $k = f(k')$ for some $k'\in M$. Now $f(m') = f(m)f(k') = f(mk')$ and by injectivity, $m' = mk'$ and so $m'\in mf^{-1}(I)$. Hence $f^{-1}(f(m)I)\subset mf^{-1}(I).$ Thus $f$ is strongly $t$-uniform. 
            
            For right-cancelability, fix a composable pair $M\xrightarrow{f}N\xrightarrow{g}P$ of $T$-algebra morphisms and assume $gf$ is strongly $t$-uniform and $f$ is surjective. We show that $g$ is strongly $t$-uniform. The surjectivity of $g$ is clear. Let $n\in N$ and let $I\leq P$ be a normal subalgebra. We show that $g^{-1}(g(n)I)\subset ng^{-1}(I)$. Since $f$ is surjective, $n = f(m)$ for some $m\in M$. Now
            \begin{align*}
               g^{-1}(g(n)I)
               &= f(f^{-1}(g^{-1}(gf(m) I))\tag{$f$ surjective, $n = f(m)$}\\
               &= f(mf^{-1}(g^{-1}(I)))\tag{$gf\in E$}\\
               &\subset f(f^{-1}(f(m)g^{-1}(I)))\tag{$mf^{-1}(J)\subset f^{-1}(f(m)J),J\subset N$}\\
               &=ng^{-1}(I).\tag{$f$ surjective, $n = f(m)$}
            \end{align*}
            
            \item Consider the following pullback in $T$-\textbf{Alg}:
             $$
             \begin{tikzcd}
            G\times_N M \arrow[d, "\pi_1"'] \arrow[r, "\pi_2"] \arrow[rd, "\lrcorner", phantom, very near start] & M \arrow[d, "h"] \\
            G \arrow[r, "f"']                                                                                    & N               
            \end{tikzcd}
             $$
             We show that $\pi_2$ is strongly $t$-uniform. Let $I\leq M$ be a normal subalgebra and let $(a,b)\in G\times_N M$. We show that $\pi_2^{-1}(b I) \subset (a,b)\pi_2^{-1}(I)$. Assume that $(x,y)\in \pi_2^{-1}(b I)$. Thus $y = bk$ for some $k\in I$ and by assumption, $ag = x$ for some $g\in G$, where $f(a)k = f(x)$ implies that $k = f(g)$ for any $k\in N$. Now $(x,y) = (a,b)(g, k)$ in $G\times M$. It suffices to show that $(g, k)\in G\times_N M$. Notice that 
             $$
             f(a)h(k) = h(b)h(k) = h(bk) = g(y) = f(x).
             $$
             and thus $f(g) = h(k)$ by the assumption on $g$ and so $(x,y)\in (a,b)\pi_2^{-1}(I)$. Therefore, $\pi_2$ is strongly $t$-uniform proving that $f$ is stably strong $t$-uniform.
             \item Let the following be a commutative diagram in $T$-\textbf{Alg}:
            \begin{equation}\label{cd:commutative-rectangle-uniformity}
            \begin{tikzcd}
            f'^{-1}(I_{N'}) \arrow[d, "\alpha"'] \arrow[r, hook] & M' \arrow[d, "\beta" description] \arrow[r, "f'"] & N' \arrow[d, "\gamma"] \\
            f^{-1}(I_N) \arrow[r,hook]                                                                         & M \arrow[r, "f"']                                 & N                     
            \end{tikzcd}
             \end{equation}
             First, we assume that $\alpha,\gamma$ and $f'$ are surjective and $f$ is a $t$-uniform surjection. We show the surjectivity of $\beta$. Let $y\in M$. By surjectivity of $\gamma f' = f\beta$ we have that $f(x) = f(\beta(x'))$ for some $x'\in M'$. By $t$-uniformity of $f$, we have that $x = \beta(x')l$ for some $l\in f^{-1}(I_N)$. We have $l = \alpha(l') = \beta(l')$ for some $l'\in f^{-1}(I_{N'})$ by surjectivity of $\alpha$ and thus $\beta(x'l') = \beta(x')\beta(l') = \beta(x')l = x$. Thereby, $\beta$ is surjective. 

             Second, we assume that $\alpha$ and $\gamma$ are injective, $f'$ weakly $t$-uniform and $\beta$ satisfies $t$-cancelation. Notice first that the left square of \eqref{cd:commutative-rectangle-uniformity} is a pullback, since $\gamma^{-1}(I_N) = I_{N'}$ by injectivity. We show that $\beta$ is injective. Assume that $\beta(m_1') = \beta(m_2')$ for $m_1',m_2'\in M'$. By injectivity of $\gamma$, we have $f'(m_1') = f'(m_2')$. Since $f'$ is weakly $t$-uniform, we have that $m_i' = m'k'_i$ for $i = 1,2$ and for some $m'\in M'$ and $k_1',k_2'\in f^{-1}(I_{N'})$. Thus $\beta(m')\beta(k'_1) = \beta(m')\beta(k'_2)$ and by $t$-cancelativity of $\beta$, $\beta(k_1') = \beta(k_2')$. Since the left square is a pullback, $k_1',k_2'\in f^{-1}(I_{N'})$ and so the injectivity of $\alpha$ shows that $k_1' = k_2'$. Therefore $m_1' = m'k_1' = m'k_2' = m_2'$ which proves the injectivity of $\beta$.

             Third, the previous argument also applies when $\beta$ is only assumed to be weakly $t$-cancellative and $f$ injective.\qedhere
             \end{enumerate}
    \end{proof}
\begin{corollary}\label{cor:uniform-morphisms-protomodular}
        Let $T$ be an algebraic theory with a binary term $t(x,y)$. Then in the following two cases the pair $(E,M)$ of classes of morphisms of $T$-algebras satisfy the protomodularity condition, where $E$ and $M$ are the classes of 
        \begin{enumerate}
            \item $t$-uniform surjections and injections, respectively,
            \item weakly $t$-uniform morphisms and $t$-cancelative surjections, respectively,
        \end{enumerate}
    \end{corollary}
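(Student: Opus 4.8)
The plan is to deduce the $M$-protomodularity condition for $E$ in both cases directly from Theorem~\ref{thm:properties-of-uniform-morphisms}(3). Since $T$-\textbf{Alg} is a regular category with an initial object, Remark~\ref{rmk:equivalentformofprotomodularity} lets me reduce to checking the condition for diagrams of the shape of Definition~\ref{def:SystemProperties},
$$
\begin{tikzcd}
a' \arrow[d, "\alpha"'] \arrow[r, "m'"] \arrow[rd, "\lrcorner", phantom, very near start] & b' \arrow[d, "\beta" description] \arrow[r, "e'"] & c' \arrow[d, "\gamma"] \\
a \arrow[r, "m" description] \arrow[rd] \arrow[rrd, very near start, phantom, "\lrcorner"] & b \arrow[r, "e"'] & c \\
 & I \arrow[ru, "\theta"'] & {}
\end{tikzcd}
$$
in which, after first replacing $I$ by the initial object and then that morphism by the monic part of its image factorization, the apex $I$ may be taken to be the minimal subalgebra $I_c$ and $\theta$ the inclusion $I_c\hookrightarrow c$, with $e,e'\in E$, $\beta\in M$, $\alpha$ and $\gamma$ isomorphisms, and the left square and lower triangle pullbacks. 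For such a diagram the lower pullback identifies $m\colon a\to b$ with the inclusion $e^{-1}(I_c)\hookrightarrow b$, and, since $\gamma$ is an isomorphism and hence $\gamma^{-1}(I_c)=I_{c'}$, the left pullback identifies $m'\colon a'\to b'$ with $e'^{-1}(I_{c'})\hookrightarrow b'$ and $\alpha$ with the induced map $e'^{-1}(I_{c'})\to e^{-1}(I_c)$. Thus, with $f=e$, $f'=e'$, and $b,c,b',c'$ playing the roles of $M,N,M',N'$, the outer rectangle is exactly the commutative rectangle of Theorem~\ref{thm:properties-of-uniform-morphisms}(3).

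Both cases then follow by reading off the two halves of that theorem. In case~(1), where $E$ is the class of $t$-uniform surjections and $M$ the class of injections, the map $f=e$ is a $t$-uniform surjection, $f'=e'$ is surjective, and $\alpha,\gamma$ are surjective, so the first assertion gives that $\beta$ is surjective; being also injective (as $\beta\in M$), $\beta$ is a bijective homomorphism of $T$-algebras, hence an isomorphism. In case~(2), where $E$ is the class of weakly $t$-uniform morphisms and $M$ the class of $t$-cancelative surjections, we have $\alpha,\gamma$ injective, $\beta$ $t$-cancelative, and $f'=e'$ weakly $t$-uniform, so the second assertion gives that $\beta$ is injective; being also surjective (as $\beta\in M$), $\beta$ is again an isomorphism. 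In either case $\beta$ is an isomorphism, which is precisely the $M$-protomodularity condition for $E$, i.e., the assertion that the pair $(E,M)$ satisfies the protomodularity condition.

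The only point that needs care is the identification in the first paragraph: one must verify that after normalizing $\theta$ to $I_c\hookrightarrow c$ the two pullback squares of Definition~\ref{def:SystemProperties} genuinely recover the kernel-type subalgebras $e^{-1}(I_c)$ and $e'^{-1}(I_{c'})$ — this is where the reductions of Remark~\ref{rmk:equivalentformofprotomodularity} and the bijectivity of $\gamma$ are used — rather than the minimal subalgebras $I_b,I_{b'}$, which in general are strictly smaller. Once the square has been matched with the rectangle of Theorem~\ref{thm:properties-of-uniform-morphisms}(3), every remaining hypothesis of that theorem (surjectivity or injectivity of $\alpha$ and $\gamma$, and the membership conditions on $e$, $e'$ and $\beta$) is literally part of the data of the protomodularity square in the relevant case, so no further argument is needed.
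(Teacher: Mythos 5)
Your proof is correct and follows the paper's route exactly: the paper's own proof is simply "follows directly from Theorem~\ref{thm:properties-of-uniform-morphisms}(3)," and your write-up supplies the identification (via Remark~\ref{rmk:equivalentformofprotomodularity}, normalizing $\theta$ to $I_c\hookrightarrow c$ and using $\gamma^{-1}(I_c)=I_{c'}$) that the paper leaves implicit. The detail you flag as needing care is handled correctly, so nothing is missing.
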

    \begin{proof}
        The claim follows directly from Theorem \ref{thm:properties-of-uniform-morphisms}(3).
    \end{proof}

    Notice that if the theory $T$ is pointed and $t(x,e)\approx x$ is provable from $T$ for some constant term $e$, then the class of strong $t$-uniform morphisms forms a protomodular system of $\textbf{Set}^T$.

\begin{corollary}\label{cor:monic-pullback}
    Let $T$ be an algebraic theory with a binary term $t(x,y)$. Let $f\colon M\to N$ be a weakly $t$-uniform and weakly $t$-cancellative. Then $f$ is injective if and only if the restriction of $f$ to $f^{-1}(I)$ is injective.
\end{corollary}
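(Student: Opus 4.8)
The plan is to dispose of the forward implication trivially and then to prove the converse by a direct diagram chase using the two uniformity hypotheses. Write $I = I_N$ for the subalgebra of structural constants of $N$. If $f$ is injective then so is its restriction to $f^{-1}(I)$, so only the converse needs work: assuming $f|_{f^{-1}(I)}$ is injective, I would take $m_1, m_2 \in M$ with $f(m_1) = f(m_2)$ and aim to conclude $m_1 = m_2$.

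First I would apply weak $t$-uniformity of $f$ to the equality $f(m_1) = f(m_2)$, obtaining $m \in M$ and $k_1, k_2 \in f^{-1}(I)$ with $m_i = t(m, k_i)$ for $i = 1, 2$. Applying the homomorphism $f$, which preserves the binary term $t$, turns this into
$$
t\bigl(f(m), f(k_1)\bigr) = f(m_1) = f(m_2) = t\bigl(f(m), f(k_2)\bigr).
$$
Next, since $k_1$ and $k_2$ both lie in $f^{-1}(I)$, the hypothesis that $f$ is weakly $t$-cancellative is exactly applicable and yields $f(k_1) = f(k_2)$. Finally, injectivity of $f|_{f^{-1}(I)}$ forces $k_1 = k_2$, and hence $m_1 = t(m, k_1) = t(m, k_2) = m_2$, so $f$ is injective.

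The only point to watch is that the elements $k_1, k_2$ produced by weak $t$-uniformity genuinely lie in $f^{-1}(I)$ — this is precisely what licenses the use of weak $t$-cancellativity, which by definition is required only on $f^{-1}(I)$. Beyond that bookkeeping I do not expect any obstacle; the chain of implications mirrors the $f$-injective case in the proof of Theorem~\ref{thm:properties-of-uniform-morphisms}(3), here with the comparison data collapsed to the single morphism $f$.
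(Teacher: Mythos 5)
Your proof is correct and is essentially the paper's argument: the paper obtains the conclusion by applying Theorem~\ref{thm:properties-of-uniform-morphisms}(3) to the rectangle with $\alpha = f|_{f^{-1}(I_N)}$, $\beta = f$, $\gamma = \mathrm{id}_N$, and your element chase is exactly the specialization of that theorem's injectivity argument to this diagram, as you yourself note. No gaps.
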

\begin{proof}
   Consider the diagram
    $$
    \begin{tikzcd}
f^{-1}(I_N) \arrow[d, "f\mid"'] \arrow[r, hook] & M \arrow[d, "f" description] \arrow[r, "f"] & N \arrow[d, "id"] \\
I_N \arrow[r, hook]                             & N \arrow[r, "id"']                          & N                
\end{tikzcd}
    $$
    Since $id_N$ and $f\mid$ are injective, $f$ both weakly $t$-uniform and weakly $t$-cancellative, we have by Theorem \ref{thm:properties-of-uniform-morphisms}(3) that $f$ is injective.
\end{proof}

\begin{corollary}
Let $T$ be an algebraic theory with a binary term $t(x,y)$. Let $f\colon M\to M$ be a surjective weakly $t$-uniform and weakly $t$-cancelative morphism between $T$-algebras. If $M$ is a Noetherian $T$-algebra, then $f$ is an isomorphism.
\end{corollary}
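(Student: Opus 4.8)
The plan is to show that the surjection $f$ is in addition injective, hence an isomorphism (a bijective $T$-algebra homomorphism is an isomorphism). Injectivity is obtained by combining the Noetherian Pullback-Theorem (Theorem~\ref{thm:Hopfian}), applied in $\textbf{Set}^T$ with the stable system of monomorphisms, with Corollary~\ref{cor:monic-pullback}.

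First I would verify the hypotheses of Theorem~\ref{thm:Hopfian} for the endomorphism $f\colon M\to M$. With the monomorphisms as the chosen stable system, ``$M$-Noetherian'' is just ``Noetherian'', which holds by assumption. Since $f$ is surjective and pullbacks in $\textbf{Set}^T$ are computed on underlying sets, every iterate $f^n$ is surjective and so is every pullback of $f^n$; in particular every pullback of $f^n$ is an epimorphism, and an epimorphism is automatically an $S$-extremal epimorphism, because in a factorization $g=ms$ with $m$ a section the epicness of $g$ forces $m$ to be epic, and an epic section is an isomorphism. Hence $f^n$ is stably $S$-extremal epic for every $n\in\N$. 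For the fixed point I would take $\pi_0\colon I_M\hookrightarrow M$, the inclusion of the subalgebra of structural constants; it is a monomorphism, and because every element of $I_M$ is the value of a closed term and $f$ is a homomorphism, $f$ restricts to the identity on $I_M$, so that $f\circ\pi_0=\pi_0$.

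Theorem~\ref{thm:Hopfian} then tells us that the pullback of $f$ along $\pi_0$ is an isomorphism. The step requiring care is to recognize what this pullback is: since $\pi_0$ is a subalgebra inclusion, pulling $f$ back along $\pi_0$ is, up to isomorphism, the corestriction $f^{-1}(I_M)\to I_M$, $m\mapsto f(m)$. In particular the restriction of $f$ to $f^{-1}(I_M)$ is injective. Since $f$ is by hypothesis weakly $t$-uniform and weakly $t$-cancellative, Corollary~\ref{cor:monic-pullback} then yields that $f$ is injective, and the proof concludes.

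I expect the main (mild) obstacle to be exactly this identification --- matching the morphism ``pullback of $f$ along $\pi_0$'' produced by Theorem~\ref{thm:Hopfian} with the morphism $f|_{f^{-1}(I_M)}$ governed by Corollary~\ref{cor:monic-pullback} --- together with the bookkeeping that surjective homomorphisms and all their pullbacks are $S$-extremal epimorphisms in $\textbf{Set}^T$. The remaining steps are direct applications of the cited results, and the degenerate case where $T$ has no constants (so $I_M$ is empty) is handled automatically, both $\pi_0$ and its pullback being vacuously the identity.
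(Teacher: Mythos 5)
Your proof is correct and follows the same route as the paper, which simply cites Theorem~\ref{thm:Hopfian} and Corollary~\ref{cor:monic-pullback}; you have merely made explicit the details the paper leaves implicit (taking $\pi_0\colon I_M\hookrightarrow M$ as the fixed point, checking that surjections and their pullbacks are $S$-extremal epimorphisms in $\mathbf{Set}^T$, and identifying the pullback of $f$ along $\pi_0$ with the corestriction $f^{-1}(I_M)\to I_M$). All of these verifications are accurate.
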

\begin{proof}
    If $M$ is Noetherian, then by Theorem \ref{thm:Hopfian} $f$ has a pullback that is an isomorphism, and Corollary \ref{cor:monic-pullback} shows that $f$ is injective and thus an isomorphism.
\end{proof}

\section*{Acknowledgements}
The author gratefully acknowledges the research was funded by the Fonds de la Recherche Scientifique (Belgium) through an Aspirant fellowship.

\bibliography{references}

\end{document}